\newcommand{\mz}{\ensuremath{\mathbb Z}}
\newcommand{\mr}{\ensuremath{\mathbb R}}
\newcommand{\mc}{\ensuremath{\mathbb C}}
\newcommand{\shortmod}{\ensuremath{\negthickspace \negthickspace \negthickspace \pmod}}
\newcommand{\half}{\ensuremath{ \frac{1}{2}}}
\newcommand{\intR}{\int_{-\infty}^{\infty}}
\newcommand{\sumstar}{\sideset{}{^*}\sum}
\newcommand{\leg}[2]{\left(\frac{#1}{#2}\right)}
\newcommand{\e}[2]{e\left(\frac{#1}{#2}\right)}
\theoremstyle{plain}		
	\newtheorem{mytheo}{Theorem}[section]
	\newtheorem{myprop}[mytheo]{Proposition}
     \newtheorem{mylemma}[mytheo]{Lemma}
\theoremstyle{remark}
\numberwithin{equation}{section}
\begin{document}
\title{The second moment of $GL(3) \times GL(2)$ $L$-functions at special points}
\author{Matthew P. Young} 
\address{Department of Mathematics \\
	  Texas A\&M University \\
	  College Station \\
	  TX 77843-3368 \\
		U.S.A.}
\email{myoung@math.tamu.edu}
\thanks{Work supported by NSF grant DMS-0758235}

\begin{abstract}
For a fixed $SL(3,\mathbb{Z})$ Maass form $\phi$, we consider the family of $L$-functions $L(\phi \times u_j, s)$ where $u_j$ runs over the family of Hecke-Maass cusp forms on $SL(2,\mathbb{Z})$.  We obtain an estimate for the second moment of this family of $L$-functions at the special points $\half + it_j$ consistent with the Lindel\"{o}f Hypothesis.  We also obtain a similar upper bound on the sixth moment of the family of Hecke-Maass cusp forms at these special points; this is apparently the first occurrence of a Lindel\"{o}f-consistent estimate for a sixth power moment of a family of $GL(2)$ $L$-functions.
\end{abstract}

\maketitle

\section{Introduction}
In this paper we study the Rankin-Selberg convolution of a fixed Maass form on $SL(3, \mz)$ with the family of Hecke-Maass cusp forms on $SL(2,\mz)$.  At the subconvexity workshop held in October 2006 at the American Institute of Mathematics, A. Venkatesh posed the question of studying the second moment of this family at the special points $\half + it_j$ (where $\frac14 + t_j^2$ is the Laplace eigenvalue), as well as an integrated version which we treat in a companion paper \cite{Y}.  These points are of particular arithmetical interest because they are zeros of the Selberg zeta function (and so their behavior controls the error term in the prime geodesic theorem, c.f. \cite{IwPGT}) and from the Phillips-Sarnak theory of deformation of cusp forms \cite{PS}.  In the context of the subconvexity problem for $L$-functions, these points are special because the conductor ``drops'' (becomes relatively small).  This feature makes the subconvexity problem more difficult because the moment method requires taking a higher moment than for other points.  On the other hand, these special points potentially allow one to study higher moments than at typical points because the relevant arithmetical sums become shorter; this is the perspective taken in this work.

This family of Rankin-Selberg $L$-function has recently been studied by X. Li \cite{Li} \cite{Li2} to show subconvexity bounds for a self-dual $GL(3)$ $L$-function in $t$-aspect, as well as for the Rankin-Selberg $L$-function itself at the central point $s=1/2$.  Her method is to study the first moment of this family at the central point, heavily exploiting the nonnegativity of the central values.  Naturally one desires to estimate the second moment since one is no longer restricted to the central point to have nonnegativity.

One natural way to study the second moment of a family of $L$-functions is through an appropriate large sieve inequality, which itself presumably has more general applications.  For this family there are some partial results due to \cite{DI} and \cite{Luo}, which we improve further in this paper.  Actually, we noticed a curious similarity between this problem and certain aspects of the large sieve inequality for $\Gamma_1(q)$ obtained in \cite{IL} (which contains an auxiliary main term sometimes violating the rule of thumb of ``number of harmonics plus length of sum'' for large sieve inequalities).  It would be of great interest to better-understand these main terms.  Evidently the large sieve inequalities on $GL(2)$ are more delicate than on $GL(1)$.

\section{Notation}
We refer to \cite{Goldfeld} for the material and notation on $GL(3)$ Maass forms.  Suppose $\phi$ is a Maass form for $SL(3, \mz)$ of type $(\nu_1, \nu_2) \in \mc^{2}$,
which is an eigenfunction of all the Hecke operators.  The Godement-Jacquet $L$-function associated to $\phi$ is
\begin{equation}
 L(\phi, s) = \sum_{n=1}^{\infty} \frac{A(1,n)}{n^s} = \prod_{p} (1 - A(1,p)p^{-s} + A(p,1)p^{-2s} - p^{-3s})^{-1}.
\end{equation}
Here $A(m,n)$ are the Fourier coefficients normalized as in \cite{Goldfeld}.  In particular, $A(1,1) =1$ and $A(m,n)$ are constant on average (see Remark 12.1.8 of \cite{Goldfeld}).  The dual Maass form $\widetilde{\phi}$ is of type $(\nu_2, \nu_1)$ and has $A(n,m) = \overline{A(m,n)}$ as its $(m,n)$-th Fourier coefficient, whence
$ L(\widetilde{\phi}, s) = \sum_{n} A(n,1) n^{-s}$. 
Letting
\begin{equation}
 \Gamma_{\nu_1, \nu_2}(s) = \pi^{-3s/2} \Gamma\leg{s + 1 - 2 \nu_1 - \nu_2}{2} \Gamma\leg{s + \nu_1 - \nu_2}{2} \Gamma\leg{s - 1 + \nu_1 + 2\nu_2}{2},
\end{equation}
the functional equation for $L(\phi, s)$ reads
\begin{equation}
 \Gamma_{\nu_1, \nu_2}(s) L(\phi, s) = \Gamma_{\nu_2, \nu_1}(1-s) L(\widetilde{\phi}, 1-s).
\end{equation}

Let $(u_j)$ be an orthonormal basis of Hecke-Maass cusp forms on $SL(2,\mz)$ with corresponding Laplace eigenvalues $\frac14 + t_j^2$.  Let $\lambda_j(n)$ be the Hecke eigenvalue of the $n$-th Hecke operator for the form $u_j$.  Since the Hecke operators on $GL(2)$ are self-adjoint, the $\lambda_j(n)$'s are real.  Then
 $L(u_j,s) = \sum_{n} \lambda_j(n) n^{-s}$
satisfies a functional equation relating to $L(u_j,1-s)$.

As explained in Chapter 12.2 of \cite{Goldfeld}, the Rankin-Selberg convolution of $\phi$ and $u_j$ is
\begin{equation}
\label{eq:Rankin}
 L(u_j \times \phi, s) = \sum_{m,n =1}^{\infty} \frac{\lambda_j(n) A(m,n)}{(m^2 n)^s}.
\end{equation}
The completed $L$-function associated to $L(u_j \times \phi, s)$, for $u_j$ even, takes the form
\begin{multline}
 \Lambda(u_j \times \phi, s) = \pi^{-3s} \Gamma\leg{s - i t_j - \alpha}{2} \Gamma\leg{s - i t_j - \beta}{2} \Gamma\leg{s - i t_j - \gamma}{2}
\\
\Gamma\leg{s + i t_j - \alpha}{2} \Gamma\leg{s + i t_j - \beta}{2} \Gamma\leg{s + i t_j - \gamma}{2} L(u_j \times \phi, s),
\end{multline}
where $\alpha = -\nu_1 - 2 \nu_2 + 1$, $\beta = -\nu_1 + \nu_2$, and $\gamma = 2 \nu_1 + \nu_2 -1$ (see Theorem 12.3.6 of \cite{Goldfeld} for the explicit gamma factors).  Then this Rankin-Selberg convolution has a holomorphic continuation to $s \in \mc$ and satisfies the functional equation
\begin{equation}
 \Lambda(u_j \times \phi, s) = \Lambda(u_j \times \widetilde{\phi}, 1-s).
\end{equation}
The case with $u_j$ odd is similar, having slightly different constants $\alpha,\beta, \gamma$.


\section{Main result}
Our main result is
\begin{mytheo}
\label{thm:mainthm}
 We have
\begin{equation}
\label{eq:secondmoment}
 \sum_{ t_j \leq T} |L(u_j \times \phi, \tfrac12 + i t_j)|^2 \ll T^{2 + \varepsilon}.
\end{equation}
\end{mytheo}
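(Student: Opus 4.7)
The plan is to exploit the conductor drop at $s=\half+it_j$ via an approximate functional equation, and then to handle the resulting spectral sum via the Kuznetsov trace formula (or equivalently a spectral large sieve). The key observation is that at $s=\half+it_j$, three of the six Archimedean gamma factors of $\Lambda(u_j\times\phi,s)$ lose their $t_j$-dependence, since $\Gamma\!\lp\frac{\half+it_j-it_j-\alpha}{2}\rp=\Gamma\!\lp\frac{\half-\alpha}{2}\rp$ and similarly for $\beta$ and $\gamma$. Consequently the analytic conductor is of size $t_j^3$ rather than the generic $t_j^6$, and the approximate functional equation gives
\[
L(u_j\times\phi,\half+it_j)=\sum_{m^2n\ll t_j^{3/2}}\frac{\lambda_j(n)A(m,n)}{m^{1+2it_j}\,n^{\half+it_j}}V(m^2n;t_j)+(\text{dual}),
\]
a sum of effective length $t_j^{3/2}$.

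Opening the square in \eqref{eq:secondmoment}, summing over $t_j\leq T$, and decomposing dyadically with $m\sim M$, $n\sim N$, $M^2N\asymp T^{3/2}$, the inner spectral sum reduces to
\[
\sum_{t_j\leq T}h(t_j)\,\lambda_j(n_1)\lambda_j(n_2)\,\left(\frac{m_1^2n_1}{m_2^2n_2}\right)^{-it_j}
\]
against a smooth weight in $m_i,n_i$. After a Mellin separation of the $t_j$-dependence of $V$, the phase above can be absorbed into the spectral test function, and the Kuznetsov trace formula converts the $t_j$-sum into a diagonal contribution (from $n_1=n_2$) plus a sum of Kloosterman sums $S(n_1,n_2;c)$ weighted by Bessel transforms of this modified test function.

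The diagonal contribution $n_1=n_2$, $m_1=m_2$ is of order $T^{2+\varepsilon}\sum_{m^2n\ll T^{3/2}}|A(m,n)|^2/(m^2n)$, which by the $GL(3)$ Rankin--Selberg bound $\sum_{m^2n\ll X}|A(m,n)|^2\ll X^{1+\varepsilon}$ is $O(T^{2+\varepsilon})$; the partial diagonal $n_1=n_2$, $m_1\neq m_2$ is oscillatory in $t_j$ and can be controlled by stationary phase in the $t_j$-integral. Together these produce the predicted Lindel\"of-consistent main term.

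The main obstacle is the off-diagonal Kloosterman contribution. Stationary phase analysis of the Bessel transform in $t_j$ localizes the $c$-sum to $c\asymp\sqrt{n_1n_2}/T$. To extract cancellation from the $GL(3)$ coefficients I would apply $GL(3)$ Voronoi summation to the $m$-sums, converting $A(m,n)$ into a dual $A$-coefficient and transforming the Kloosterman phase into a more tractable additive twist. The delicate point is that, with the sum length $N\asymp T^{3/2}$ saturating the large-sieve threshold $T^2+N\asymp T^2$, there is no slack: any inefficiency in the off-diagonal analysis would destroy the $T^{2+\varepsilon}$ bound. This is where the bulk of the technical work of the proof should lie, and is analogous in spirit to the auxiliary main-term phenomenon in the $\Gamma_1(q)$ large sieve of \cite{IL} noted in the introduction.
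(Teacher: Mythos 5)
Your outline correctly identifies the conductor-drop heuristic, the effective length $\asymp t_j^{3/2}$, and the two main tools (Kuznetsov and $GL(3)$ Voronoi), and your diagonal estimate via the Rankin--Selberg bound is the right main term. However, as a proof it has a genuine gap, and your honesty about it (``this is where the bulk of the technical work \ldots should lie'') shows you sense this yourself: the off-diagonal Kloosterman contribution is never actually estimated, and it is precisely there that the theorem lives or dies, since the length $N\asymp T^{3/2}$ offers no slack.

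Two concrete problems with the sketch. First, you silently discard the continuous-spectrum contribution. The Kuznetsov formula produces $S(\mathcal{A})+T(\mathcal{A})$, and for this class of coefficients $T(\mathcal{A})$ can be as large as $T N \|\mathcal{A}\|^2$, far exceeding $T^2\|\mathcal{A}\|^2$; it must be subtracted, and the paper shows (following Luo) that a specific piece of the Kloosterman contribution, isolated by Poisson summation in the modulus variable, cancels $T(\mathcal{A})$ exactly to leading order. Without that cancellation the argument cannot close. Second, applying Voronoi to the ``$m$-sums'' is the wrong move in the decomposition you set up: after Kuznetsov the additive twist lives on $n_1,n_2$ (from $S(n_1,n_2;c)$), while the $m_i$ carry no additive character, so there is nothing for Voronoi to simplify there. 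The paper instead first applies Cauchy in the $GL(3)$ ``first index'' to reduce to a clean bilinear form $\sum a_n\lambda_j(n)n^{it_j}$ with $a_n=A(l,n)n^{-1/2}w(n/N)$ for fixed $l$, then proves a refined large-sieve asymptotic $S(\mathcal{A})=S_1(\mathcal{A};X)+O((T^2+NT/X+N^{3/2}/T)N^\varepsilon\|\mathcal{A}\|^2)$, and finally applies $GL(3)$ Voronoi in the $n$-variable to $S_1(\mathcal{A};X)$. The stationary-phase condition for the resulting Voronoi integral forces the dual variable into an empty range once $X=(N/l)^{1/3}T^{-\varepsilon}$, so $S_1$ is negligible; the remaining error term is then $O(T^{2+\varepsilon}\|\mathcal{A}\|^2)$ precisely because $N\ll T^{3/2+\varepsilon}/l^2$. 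None of this machinery (the Poisson step, the $Q_{000}$/$Q_{001}$ split, the Fourier separation of variables, the choice of $X$) appears in your plan, and without it the off-diagonal is uncontrolled.
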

The convexity bound for $L(u_j \times \phi, \half + i t_j)$ is $t_j^{3/4 + \varepsilon}$ due to a conductor-dropping phenomenon.  In practice this means that the arithmetical sums are somewhat shortened yet the coefficients remain to be genuine $GL(3) \times GL(2)$ objects, which allows us to study these fascinating arithmetical coefficients in a more analytically-tractible setting.

The Phillips-Sarnak \cite{PS} theory of deformation of cusp forms provides a motivation for studying the nonvanishing of the Rankin-Selberg convolution of a 
$GL(2)$ holomorphic cusp form with this family of Maass forms at the special point $\half + i t_j$, but I do not know of any application for the convolution with a $GL(3)$ form as in Theorem \ref{thm:mainthm}.

The method of proof of Theorem \ref{thm:mainthm} also shows
\begin{mytheo}
\label{thm:sixthmoment}
\begin{equation}
\label{eq:sixthmoment}
 \sum_{ t_j \leq T} |L(u_j, \tfrac12 + i t_j)|^6 \ll T^{2 + \varepsilon}.
\end{equation}
\end{mytheo}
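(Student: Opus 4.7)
The plan is to mimic the argument for Theorem \ref{thm:mainthm}. Write $|L(u_j, \tfrac12+it_j)|^6 = |L(u_j, \tfrac12+it_j)^3|^2$ and apply the approximate functional equation to $L(u_j, \tfrac12+it_j)^3$. At $s = \tfrac12 + it_j$, the archimedean gamma factors of $L(u_j, s)^3$ contain three copies of $\Gamma(\tfrac14)\Gamma(\tfrac14 + it_j)$, which by Stirling is larger than the corresponding gamma factor at a generic point on the critical line. This is precisely the conductor-drop phenomenon; it shortens the AFE, which takes the form
\[
L(u_j, \tfrac12 + it_j)^3 = \sum_{n \ll t_j^{3/2}} c_j(n)\, n^{-1/2 - it_j}\, V(n / t_j^{3/2}) + (\text{dual sum}),
\]
where $V$ is a smooth cutoff and $c_j(n) = \sum_{n_1 n_2 n_3 = n} \lambda_j(n_1)\lambda_j(n_2)\lambda_j(n_3)$ is the triple Hecke convolution.

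Next, after inserting a smooth weight $w(t_j)$ localizing to $t_j \asymp T$ and expanding the modulus squared, the problem reduces to controlling the spectral sum $\sum_j w(t_j)(n_1/n_2)^{-it_j} c_j(n_1) c_j(n_2)$ for $n_i \ll T^{3/2}$. Repeatedly applying the $GL(2)$ Hecke relation $\lambda_j(a)\lambda_j(b) = \sum_{d \mid (a,b)} \lambda_j(ab/d^2)$ rewrites
\[
c_j(n_1)\, c_j(n_2) = \sum_{m_1, m_2} \lambda_j(m_1)\,\lambda_j(m_2)\, W(m_1, m_2; n_1, n_2),
\]
where $W$ is a divisor-type arithmetic weight with $W \ll (m_1 m_2 n_1 n_2)^{\varepsilon}$. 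This is structurally identical to the key expansion in the proof of Theorem \ref{thm:mainthm}, where the analogous weight is $A(m_1, n_1)\overline{A(m_2, n_2)}$; the divisor weights here are bounded pointwise while the $GL(3)$ coefficients $A(m, n)$ satisfy only a Rankin--Selberg type second-moment bound, so the present situation is strictly easier.

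One then inserts Kuznetsov's formula, which decomposes $\sum_j w(t_j)(n_1/n_2)^{-it_j}\lambda_j(m_1)\lambda_j(m_2)$ into a diagonal $\delta_{m_1 = m_2}$ main term plus a sum of Kloosterman terms $S(m_1, m_2; c)$ weighted against a Bessel transform of $w(t)(n_1/n_2)^{-it}$. Standard mean-value estimates for the resulting divisor sums handle the diagonal contribution and yield the target bound $T^{2+\varepsilon}$. The off-diagonal Kloosterman contribution is then estimated by the same machinery developed for Theorem \ref{thm:mainthm}: stationary-phase analysis of the Bessel transform, Poisson summation in the $n_i$ variables, and Voronoi summation on the $m_i$ variables, all made effective by the conductor-drop length restriction $n_i \ll T^{3/2}$.

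The principal obstacle, exactly as in Theorem \ref{thm:mainthm}, is the off-diagonal Kloosterman contribution, which requires careful exploitation of cancellation in the Kloosterman--Bessel transforms. However, no genuinely new techniques are needed beyond those of Theorem \ref{thm:mainthm}, because the substitution of divisor-type weights $W$ in place of the products $A(m_1, n_1)\overline{A(m_2, n_2)}$ only simplifies the underlying analysis.
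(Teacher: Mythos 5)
Your high-level intuitions (conductor drop, AFE of length $t_j^{3/2}$, reduction to a bilinear spectral sum) are sound, but the route you describe does not match the paper's and contains a genuine gap. Your characterization of the proof of Theorem~\ref{thm:mainthm} is inaccurate: the paper never produces an expression with weight $A(m_1,n_1)\overline{A(m_2,n_2)}$. Instead it expands $\lambda_{u_j\times\phi}(n)=\sum_{l^2 n'=n}A(l,n')\lambda_j(n')$, applies Cauchy in $l$, and works with $H_l$, a diagonal-type quantity $\sum_j w(t_j)|\rho_j(1)|^2\bigl|\sum_n A(l,n)\lambda_j(n)n^{-1/2-it_j}w_2(n/N)\bigr|^2$ in which the arithmetic weight $A(l,n)$ is \emph{fixed} (independent of $j$) and multiplied by a \emph{single} Hecke eigenvalue. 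This fixed-coefficient structure is exactly what permits the application of Luo's spectral large sieve at special points (Theorem~\ref{thm:largesieve}) and, crucially, the $GL(3)$ Voronoi formula to the weight $A(l,n)$.

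For the sixth moment the analogue of that step is the Dirichlet series identity
\[
 L(u_j,s)^3 \;=\; \sum_{a,b\ge 1}\frac{\mu(a)\,d_3(b)}{(ab)^{2s}}\sum_{n\ge 1}\frac{d_3(n)\,\lambda_j(an)}{(an)^s},
\]
which you do not supply. It is this identity that reduces the problem to the large-sieve quantity with $a_n=n^{-1/2}w_2(n/N)d_3(n/a)$, so that the Voronoi step can be performed on the fixed divisor weight $d_3$ (via Ivi\'c's version of Voronoi for $d_3$, which plays the role of the Miller--Schmid $GL(3)$ formula). Your alternative --- writing $c_j(n)=(\lambda_j*\lambda_j*\lambda_j)(n)$, opening the square, and Hecke-reducing $c_j(n_1)c_j(n_2)$ to $\sum_{m_1,m_2}\lambda_j(m_1)\lambda_j(m_2)W(m_1,m_2;n_1,n_2)$ --- produces a $j$-independent weight $W$ that is merely a generic divisor-type object in four variables; it does not carry the multiplicative structure that a Voronoi summation formula attaches to. Thus the claim ``Voronoi summation on the $m_i$ variables'' has nothing to apply to, and the claim that the situation is ``strictly easier'' than Theorem~\ref{thm:mainthm} is unjustified. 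Moreover you elide the genuinely delicate point at special points: the Kuznetsov test function is $w(t)(n_1/n_2)^{-it}$, so the Bessel transform is coupled to the ratio $n_1/n_2$, and handling this requires precisely the machinery of Sections 6--8 (Luo's decomposition, the variant large sieve of Lemma~\ref{lemma:variantsieve}, the extraction of $S_1(\mathcal{A};X)$), not a ``standard mean-value estimate.'' In short, the missing idea is the cube-to-divisor identity above; without it the proposal does not set up the structure needed for the Voronoi step, and the proof does not go through as written.
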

This is apparently the first instance of an estimation for the sixth moment of a family of $GL(2)$ $L$-functions that is consistent with the Lindel\"{o}f hypothesis.

Deshouillers-Iwaniec \cite{DI} obtained a large sieve inequality that could obtain the estimate \eqref{eq:sixthmoment} but for the fourth moment.  Luo \cite{Luo} obtained an improved large sieve inequality which shows that the eight moment is bounded by $T^{5/2 + \varepsilon}$, and the sixth moment is bounded by $T^{9/4 + \varepsilon}$.  
In this work we further develop Luo's large sieve inequality at special points into a form conducive for further analysis using special properties of the vector under consideration.
For our application we then use the $GL(3)$ Voronoi summation formula of \cite{MS} to reveal extra cancellation.  It is worth mention that the estimate
\begin{equation}
 \sum_{T < t_j \leq T + 1} |L(u_j, \tfrac12 + i t_j)|^4 \ll T^{1 + \varepsilon}
\end{equation}
follows from a relevant short interval large sieve inequality stated without proof by Iwaniec \cite{I2} and proved independently by Jutila \cite{Jutila} and Luo \cite{Luo2}.  However, this inequality does not imply Theorem \ref{thm:sixthmoment} (though one does immediately recover the convexity bound by dropping all but one term).

We remark that our proof of Theorem \ref{thm:mainthm} uses summation formulas for each of $GL(1)$ (Poisson summation), $GL(2)$ (Kuznetsov formula), and $GL(3)$ (Voronoi summation).

The majority of this paper is used to prove Theorem \ref{thm:mainthm}, while in Section \ref{section:sixth} we briefly indicate the changes necessary to prove Theorem \ref{thm:sixthmoment}.

As a convention, throughout the paper $\varepsilon$ denotes an arbitrarily small positive constant that may vary from line to line.  

\subsection{Acknowledgements}
I would like to thank Brian Conrey, Xiaoqing Li, Wenzhi Luo, and Akshay Venkatesh for taking an interest in this work.

\section{Approximate functional equation}
In order to represent the special value we use an approximate functional equation.  Write $\lambda_{u_j \times \phi}(n)$ for the coefficient of $n^{-s}$ in the Dirichlet series \eqref{eq:Rankin} for $L(u_j \times \phi, s)$.  The standard approximate functional equation (Theorem 5.3 of \cite{IK}) says
\begin{equation}
\label{eq:AFE}
 L(u_j \times \phi, \tfrac12 + i t_j) = \sum_n \frac{\lambda_{u_j \times \phi}(n)}{n^{\half + i t_j}} V_{\half + i t_j}\leg{n}{Y} + \epsilon_j \sum_n \frac{\lambda_{u_j \times \widetilde{\phi}}(n)}{n^{\half - i t_j}} V_{\half - i t_j}^{*}(nY),
\end{equation}
for any $Y > 0$, where $V_{s}(y)$ and $V_{s}^{*}(y)$ are certain explicit smooth functions, and $\epsilon_j$ is a certain complex number of absolute value $1$.  
Precisely,
\begin{equation}
\label{eq:V}
 V_{\half + it_j}(y) = \frac{1}{2 \pi i} \int_{(3)} y^{-s} \frac{\gamma(\half + i t_j + s)}{\gamma(\half + i t_j)} \frac{G(s)}{s} ds,
\end{equation}
where $\Lambda(u_j \times \phi, s) = \gamma(s) L(u_j \times \phi, s)$ and $G(s)$ is an entire function with rapid decay in the imaginary direction.  Here $V^{*}_{\half - i t_j}$ has a similar form to $V_{\half + it_j}$ but with $\gamma(s)$ replaced by $\gamma^*(s)$, where $\Lambda(u_j \times \widetilde{\phi}, s) = \gamma^*(s) L(u_j \times \widetilde{\phi}, s)$.

An exercise in Stirling's approximation shows
\begin{equation}
\label{eq:Stirling}
 G(s) \frac{\gamma(\half + i t_j + s)}{\gamma(\half + i t_j)} = t_j^{\frac{3s}{2}} h(s)(1 + \frac{c_1(s)}{t_j} + \dots),
\end{equation}
where $h(s)$ is a holomorphic function with exponential decay as $\text{Im}(s) \rightarrow \infty$ in any fixed strip (with appropriate choice of $G$), and
each $c_i(s)$ is a polynomial in $s$.  This is valid for $\text{Re}(s) > 0$, say.  By inserting \eqref{eq:Stirling} into \eqref{eq:V} we obtain an asymptotic expansion for $V_{\half + it_j}(y)$ with leading term
\begin{equation}
\label{eq:V1}
 V_1\leg{y}{t_j^{3/2}} = \frac{1}{2 \pi i} \int_{(3)} \leg{t_j^{3/2}}{y}^{s}  \frac{h(s)}{s} ds.
\end{equation}
Of course $V^*$ has a similar expansion with leading order function $V_2$, say.

\section{Initial cleaning}
\label{section:cleaning}
In this section we make a series of simplifying reductions on the mean value \eqref{eq:secondmoment} using the approximate functional equation \eqref{eq:AFE} to represent the special values.  First, note that in proving Theorem \ref{thm:mainthm} it suffices to consider the dyadic segment $T < t_j \leq 2T$, and to treat $u_j$ even and odd separately.  For simplicity we treat the even case.  By taking a smooth partition of unity to the $n$-sums, and using Cauchy's inequality, it suffices to estimate 
\begin{equation}
\label{eq:cleaning1}
\sum_{\substack{T < t_j \leq 2T \\ u_j \text{ even}}} \left| \sum_{P < n \leq 2P}  \frac{\lambda_{u_j \times \phi}(n)}{n^{\half + i t_j}} w_1\leg{n}{P} V_{\half + i t_j}\leg{n}{Y} \right|^2 + \left| \sum_{P < n \leq 2P} \frac{\lambda_{u_j \times \widetilde{\phi}}(n)}{n^{\half - i t_j}} w_1\leg{n}{P} V^{*}_{\half - i t_j}(nY) \right|^2,
\end{equation}
where $P \ll T^{3/2 + \varepsilon}$, and the truncation $P < n \leq 2P$ is assumed to be redundant to the support of $w_1$.  Our next goal is to remove the dependence on $j$ in the weight functions as this will allow us to directly quote relevant results from the literature.  

Next we insert the asymptotic expansion for $V_{\half + i t_j}$ with leading term given by $V_1$ (and similarly for $V^*$).  For simplicity we treat only the leading-order term since all the other terms are of the same form but even smaller.  The contribution of this leading-order term to \eqref{eq:cleaning1} takes the form
\begin{equation}
\sum_{\substack{T < t_j \leq 2T \\ u_j \text{ even}}} \left| \sum_{P < n \leq 2P} \frac{\lambda_{u_j \times \phi}(n)}{n^{\half + i t_j}} w_1\leg{n}{P} V_1\leg{n}{Y t_j^{3/2}} \right|^2 + \left| \sum_{P < n \leq 2P}  \frac{\lambda_{u_j \times \widetilde{\phi}}(n)}{n^{\half - i t_j}} w_1\leg{n}{P} V_{2}\leg{nY}{t_j^{3/2}} \right|^2.
\end{equation}
We use a trick of \cite{DI} (see their Section 16): integrate with respect to $\frac{dY}{Y}$ from $Y_j=T^{3/2}/t_j^{3/2}$ to $eY_j$.  For the integral of the first term perform the change of variables $Y \rightarrow Y T^{3/2}/t_j^{3/2}$ (so now $1 \leq Y \leq e$) while for the latter do $Y \rightarrow Y t_j^{3/2}/T^{3/2}$ (so now $(T/t_j)^{3} \leq Y \leq e (T/t_j)^{3}$).  Then use positivity to extend both integrals to $\frac{1}{8} \leq Y \leq e$.  At this point $t_j$ has been eliminated from the weight function, which now takes the form $w_1(\tfrac{n}{P}) V_i(\tfrac{n y}{T^{3/2}})$ where $8^{-1} \leq y \leq 8$.  Letting $w_2(x) = w_1(x) V_i(x y \tfrac{P}{T^{3/2}})$, we see that $w_2$ satisfies the same essential properties as $w_1$, namely that it is smooth of compact support and each of its derivatives is bounded (independently of $P$).  Thus the problem reduces to estimating
\begin{equation}
 \sum_{\substack{T < t_j \leq 2T \\ u_j \text{ even}}} \left| \sum_{P < n \leq 2P}  \frac{\lambda_{u_j \times \phi}(n)}{n^{\half + it_j}}  w_2\leg{n}{P} \right|^2.
\end{equation}

It is also convenient to introduce the weight $\alpha_j = |\rho_j(1)|^2/\cosh(\pi t_j)$ which naturally appears in the Kuznetsov formula, where $\rho_j(1)$ is the first Fourier coefficient of the $u_j(z)$ (which recall has $L^2$ norm $1$).  These weights satisfy $t_j^{-\varepsilon} \ll \alpha_j \ll t_j^{\varepsilon}$ due to \cite{HL} and \cite{I}.  We also introduce the nonnegative smooth weight function
\begin{equation}
 w(t_j) = 2 \frac{\sinh((\pi - \frac{1}{T}) t_j)}{\sinh(2 \pi t_j)},
\end{equation}
satisfying $w(t_j) \sim \frac{\exp(-\pi t_j/T)}{\cosh(\pi t_j)}$ for $T < t_j \leq 2T$, and then remove this sharp truncation and the condition that $u_j$ is even, by positivity.  It therefore suffices to estimate
\begin{equation}
 H=\sum_{t_j} w(t_j) |\rho_j(1)|^2 \left| \sum_{P < n \leq 2P} w_2\leg{n}{P} \frac{\lambda_{u_j \times \phi}(n)}{n^{\half + it_j}}  \right|^2,
\end{equation}
the necessary bound being $H \ll T^{2 + \varepsilon}$.  Using \eqref{eq:Rankin} and Cauchy's inequality, we get
\begin{equation}
\label{eq:cleaning6}
H
\ll \log{P} \sum_{l \leq \sqrt{2P}} \frac{1}{l} \sum_{t_j} w(t_j) |\rho_j(1)|^2  \left| \sum_{P/l^2 < n \leq 2P/l^2} w_2\leg{ n}{P/l^2} \frac{A(l,n)\lambda_{j}(n)}{n^{\half + it_j}}  \right|^2.
\end{equation}
For each $l$, let, with $N = P/l^2$,
\begin{equation}
 H_l = \sum_{t_j} w(t_j) |\rho_j(1)|^2  \left| \sum_{N < n \leq 2N} w_2\leg{n}{N} \frac{A(l,n)\lambda_{j}(n)}{n^{\half + it_j}}  \right|^2.
\end{equation}
Now $H_l$ is in the form to which we can quote some results of Luo \cite{Luo}.  We will show
\begin{equation}
\label{eq:Sm}
 H_l \ll T^{2 + \varepsilon}\left(\sum_{N <n \leq 2N} \frac{|A(l,n)|^2}{n} +1\right),
\end{equation}
provided $N \ll T^{\frac32 + \varepsilon}$.  Supposing \eqref{eq:Sm} holds, then we obtain
\begin{equation}
H \ll T^{2 + \varepsilon} \left(\sum_{l^2 n \ll P} \frac{|A(l,n)|^2}{ln} +1\right)\ll T^{2 + \varepsilon},
\end{equation}
by the analytic properties of the Rankin-Selberg $L$-function $L(\phi \times \phi, s)$, using a standard contour integration method (Remark 12.1.8 of \cite{Goldfeld}).  Thus Theorem \ref{thm:mainthm} follows from \eqref{eq:Sm}.

\section{The large sieve}
A powerful tool for the study of moments of $L$-functions are the large sieve inequalities, and indeed we shall frequently apply them in our subsequent work.  The classical large sieve inequality for Farey fractions states
\begin{equation}
\label{eq:largesieve}
 \sum_{b \leq B} \sum_{\substack{x \shortmod{b} \\ (x,b) = 1}} \left| \sum_{N \leq m < N + M} a_m \e{xm}{b} \right|^2 \leq (B^2 + M) \sum_{N \leq m < N+ M} |a_m|^2.
\end{equation}
Gallagher \cite{Gallagher} generalized \eqref{eq:largesieve} to handle an additional integration against $m^{it}$.  The following result extends Gallagher's method to allow for more general oscillatory integrals.
\begin{mylemma}
\label{lemma:variantsieve}
 Let $f(y)$ be a continuously differentiable function on $[N, N+M]$ such that $f'$ does not vanish.  Let $X = \sup_{y \in [N, N+M]} \frac{1}{|f'(y)|}$.  Then for any complex numbers $b_m$,
\begin{equation}
\label{eq:9.2}
 \int_{-T}^{T} \sum_{b \leq B} \sum_{\substack{x \shortmod{b} \\ (x,b) = 1}} \left| \sum_{N \leq m < N + M} b_m \e{xm}{b} e(t f(m)) \right|^2 dt \ll (B^2 T + X) \sum_{N \leq m < N+ M} |b_m|^2.
\end{equation}
\end{mylemma}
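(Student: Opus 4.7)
The plan is to follow Gallagher's classical approach to the large sieve with an additional integration parameter, generalized from the log-weight $m^{it}$ to the arbitrary oscillatory weight $e(tf(m))$. First I would open the square on the left-hand side of \eqref{eq:9.2} and swap the $t$-integration inside the $(m,n)$ double sum. Writing $C(k) := \sum_{b \leq B} c_b(k)$ for the partial sum of Ramanujan sums produced by the inner orthogonality over $x \bmod b$, and $K(m,n) := \int_{-T}^{T} e(t(f(m)-f(n))) \, dt$, this converts the left-hand side into $\sum_{m,n} b_m \bar{b}_n \, C(m-n) \, K(m,n)$.

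Next I would exploit the two standard analytic estimates $|K(m,n)| \leq 2T$ (trivial) and $|K(m,n)| \leq 1/(\pi|f(m)-f(n)|)$ (from explicit evaluation of the integral). Since $f'$ has constant sign on $[N, N+M]$ with $\sup 1/|f'| = X$, the mean value theorem gives $|f(m) - f(n)| \geq |m - n|/X$ for $m \neq n$, so $|K(m,n)| \ll X/|m-n|$ off the diagonal.

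The third step applies the symmetric inequality $2|b_m \bar{b}_n| \leq |b_m|^2 + |b_n|^2$, using the symmetry of $|K(m,n)||C(m-n)|$ under swapping $m$ and $n$, to reduce the problem to the pointwise bound
\[
\sup_m \sum_n |C(m-n)| \, |K(m,n)| \ll B^2 T + X.
\]
The diagonal $n = m$ contributes $|C(0)| \cdot 2T = 2T \sum_{b \leq B} \phi(b) \ll B^2 T$, matching the first term on the right. For the off-diagonal, I would split the $k = m-n$ sum at $|k| \sim X/T$ and insert the kernel bound $\min(2T, X/|k|)$; combined with the identity $C(k) = \sum_{d | k} d \, M(B/d)$ (where $M$ is the Mertens function) and standard divisor-type estimates, the contribution is $O(X)$ up to logarithmic factors absorbed in the $\ll$.

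The main obstacle is precisely this last step: one must not lose an extra factor of $B$ or $M$ when summing against the Ramanujan partial sums $C(k)$, since a naive triangle-inequality bound $|C(k)| \leq B\tau(k)$ leaves a spurious $B\log^2$ factor. A cleaner alternative, closer in spirit to Gallagher's original proof of the classical large sieve, is to apply the Sobolev--Gallagher embedding in the Farey-fraction variable (using the $B^{-2}$-spacing of Farey fractions) while handling the $t$-integration via the well-spacing $\geq 1/X$ of the frequencies $\{f(m)\}$ through the Montgomery--Vaughan or Plancherel--Polya inequality. Combining these two spacings in a single two-variable argument is the technical heart of the proof, and it is what allows the improvement from the trivial bound $(B^2T + B^2 X)$ down to the claimed $(B^2 T + X)$.
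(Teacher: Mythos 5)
Your first approach, expanding into the Ramanujan partial sums $C(k)=\sum_{b\le B}c_b(k)$ and reducing to the supremum bound $\sup_m\sum_n|C(m-n)||K(m,n)|\ll B^2T+X$, has the gap that you yourself identify: once you use $|C(k)|\le B\tau(k)$, the off-diagonal sum over $|k|\ll X/T$ yields $\ll BX$ (up to logs), not $X$, and there is no obvious way to recover the lost factor of $B$ from the cancellation inside $C(k)$ alone. This is not a fixable detail — the classical large sieve itself cannot be proved by bounding Ramanujan partial sums in absolute value, and the same obstruction appears here. Your ``cleaner alternative'' in the last paragraph (combine Farey spacing with the $\ge 1/X$ spacing of the $f(m)$ in a two-variable Sobolev/Montgomery--Vaughan argument) points in the right general direction but is not actually carried out; it is precisely the ``technical heart'' that is missing, so the proposal does not constitute a proof.

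The paper's route is different and avoids the Ramanujan-sum expansion entirely. After rescaling to $T=1$, it majorizes the $t$-integral by a nonnegative Schwartz function $w$ with compactly supported $\widehat w$, so that $\int_{-1}^1|\sum_m c_m e(tf(m))|^2\,dt\le\sum_{m,n}c_m\overline{c_n}\,\widehat w(f(m)-f(n))$. The compact support of $\widehat w$ forces $|f(m)-f(n)|\ll1$, hence $|m-n|\ll X$ by the mean value theorem. One then dissects $[N,N+M]$ into boxes of sidelength $\asymp\min(M,X)$, reverses the Fourier transform to re-express the sum as $\int w(t)\sum_I|\sum_{m\in I}c_m e(tf(m))|^2\,dt$ by Cauchy, specializes $c_m=b_m e(xm/b)$, and applies the \emph{classical} additive large sieve \eqref{eq:largesieve} inside each box $I$ with the short length $\min(M,X)$ in place of $M$. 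This yields $(B^2+\min(M,X))\sum|b_m|^2$, and the rescaling restores $(B^2T+X)\sum|b_m|^2$. The crucial idea your sketch lacks is this localization to blocks of length $\asymp X$ (so that the Farey spacing $B^{-2}$ only has to compete with a sum of length $X$, not $M$), after which the classical large sieve does all the work and no delicate two-variable spacing lemma is needed.
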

The case $f(y) = \frac{1}{2 \pi} \log{y}$ is handled by \cite{Gallagher}, where $X \asymp N$, though we could not find \eqref{eq:9.2} in the literature.  For the proof we will modify a trick we learned from the paper \cite{Luo}.
\begin{proof}
By the change of variables $t \rightarrow tT$, it suffices to consider the case $T=1$.  Let $w$ be a nonnegative Schwartz function such that $w(x) \geq 1$ for $|x| \leq 1$, such that $\widehat{w}$ has compact support.  See \cite{Vaaler} for a nice survey on such functions as well as some ideas relevant in this proof. Then for any sequence of complex numbers $c_m$, we have 
\begin{equation}
\label{eq:9.3}
\int_{-1}^{1} \left| \sum_m c_m e(t f(m)) \right|^2 dt \leq \sum_{m,n} c_m \overline{c_n} \widehat{w}(f(m) - f(n)).
\end{equation}
Since $\widehat{w}$ is compactly supported, we must have $|f(m) - f(n)| \ll 1$.  By the mean-value theorem, $|f(m) - f(n)| \geq |m-n| \inf_y |f'(y)|$, so $|m-n| \ll X$.  Dissect the sum over $m$ and $n$ into boxes $I \times J$ of sidelength $\ll \min(M, X)$ so that the only relevant boxes $I \times J$ have $I$ and $J$ either equal or adjacent (``nearby'', say).  Thus the right hand side of \eqref{eq:9.3} equals
\begin{equation}
\label{eq:9.4}
 \sum_{I, J \text{ nearby}} \sum_{(m,n) \in I \times J} c_m \overline{c_n} \widehat{w}(f(m) - f(n)).
\end{equation}
Having enforced the condition that $I$ and $J$ are nearby, we then reverse the Fourier transform to express it in terms of $w$, getting that \eqref{eq:9.4} equals
\begin{equation}
\label{eq:9.5}
\intR w(t) \sum_{I, J \text{ nearby}} \sum_{(m,n) \in I \times J} c_m e(tf(m)) \overline{c_n e(tf(n))} dt.
\end{equation}
By Cauchy's inequality, \eqref{eq:9.5} is
\begin{equation}
 \ll \intR w(t) \sum_{I} \left| \sum_{m \in I} c_m e(t f(m)) \right|^2 dt.
\end{equation}
Specializing this to $c_m = \e{xm}{b}$ and summing over $x$ and $b$ appropriately gives that the left hand side of \eqref{eq:9.2} (with $T=1$) is
\begin{equation}
\label{eq:9.7}
 \ll \intR w(t) \sum_{I} \sum_{b \leq B} \sum_{\substack{x \shortmod{b} \\ (x,b) = 1}} \left| \sum_{m \in I} b_m \e{xm}{b} e(t f(m)) \right|^2 dt.
\end{equation}
By \eqref{eq:largesieve} with $a_m = b_m e(t f(m))$, we get that \eqref{eq:9.7} is
\begin{equation}
 \intR |w(t)| \sum_{I} (B^2 + \min(M, X)) \sum_{m \in I} |b_m|^2 \ll (B^2 + X) \sum_{N \leq m < N + M} |b_m|^2. \qedhere
\end{equation}
\end{proof}

\section{Spectral large sieve at special points}
In \cite{Luo}, W. Luo obtained a strong large sieve inequality applicable to the analysis of $H_l$.  For any sequence of real numbers $a_n$, let
\begin{equation}
 S(\mathcal{A}) = \sum_{t_j} w(t_j) |\rho_j(1)|^2  \left| \sum_{N < n \leq 2N} a_n \lambda_{j}(n) n^{i t_j} \right|^2,
\end{equation}
as in (15) of \cite{Luo}.  Luo's result is that
\begin{equation}
\label{eq:Luo}
 S(\mathcal{A}) \ll (T^2 + T^{3/2} N^{1/2} + N^{5/4})(NT)^{\varepsilon} \|\mathcal{A}\|^2.
\end{equation}
The term $T^{3/2} N^{1/2}$ is suggestive of the estimate \cite[Corollary 12.1]{IL}, and indeed there are some analogies between these large sieve inequalities.  Taking $N = T^{3/2 + \varepsilon}$ gives the estimate of $T^{9/4 + \varepsilon}$ for \eqref{eq:sixthmoment}.

To prove \eqref{eq:Luo}, Luo decomposes the sum $S(\mathcal{A})$ into the sum of other terms and estimates the various terms in different ways, with the primary tools being the Kuznetsov formula and the classical additive large sieve inequality.  All of these estimates but one are sufficient for our application. 
Our approach differs in that we shall obtain a new asymptotic evaluation of this exceptional term that is conducive to further analysis using special properties of the vector $a_n$.  Precisely, we shall show
\begin{mytheo}
\label{thm:largesieve}
 We have for any $1 \leq X \leq T$ and $N \gg T$,
\begin{equation}
\label{eq:SS1}
 S(\mathcal{A}) = S_1(\mathcal{A};X) + O(T^2 + \frac{NT}{X} + \frac{N^{3/2}}{T} )N^{\varepsilon} \|\mathcal{A}\|^2,
\end{equation}
where 
\begin{equation}
\label{eq:S1bound}
 S_1(\mathcal{A};X) \ll T \sum_{r < X} \frac{1}{r} \sum_{0 \neq |k| \ll rT^{\varepsilon}} \frac{1}{|k|} \int_{-T^{-\varepsilon}}^{T^{-\varepsilon}} \left| \sum_n a_n S(k,n;r) \e{un}{rT} \right|^2 du.
\end{equation}
\end{mytheo}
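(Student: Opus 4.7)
The plan is to apply the Kuznetsov trace formula to $S(\mathcal A)$ and then extract the exceptional term by splitting the Kloosterman moduli at $X$. Opening the square in $S(\mathcal A)$ and applying Kuznetsov with test function $h_{m,n}(t)=\cosh(\pi t)\,w(t)(m/n)^{it}\sqrt{mn}$ yields
\[
S(\mathcal A) \;=\; D(\mathcal A) + E(\mathcal A) + \sum_{m,n}a_m\overline{a_n}\sum_{c\ge 1}\frac{S(m,n;c)}{c}\,\Phi_{m,n}\!\left(\frac{4\pi\sqrt{mn}}{c}\right),
\]
where $D,E$ are the diagonal and Eisenstein pieces and $\Phi_{m,n}$ is the $J_{2it}$-Bessel transform of $h_{m,n}$. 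Since $w(t_j)\sim e^{-\pi t_j/T}/\cosh(\pi t_j)$ on $t_j\sim T$, one has $D+E\ll T^2\|\mathcal A\|^2 N^\varepsilon$, giving the first term of \eqref{eq:SS1}.

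The off-diagonal is then controlled by uniform asymptotics of $J_{2it}(x)$ with $t\sim T$ and $x\sim N/c$. For $c\gg N/T$ (transition/exponentially-decaying regime), $\Phi_{m,n}$ is essentially negligible; Cauchy--Schwarz combined with the Weil bound on $S(m,n;c)$ then contributes the $N^{3/2}/T$ error of \eqref{eq:SS1}. For $c\ll N/T$ (oscillatory regime), $J_{2it}(x)/\cosh(\pi t)\sim\sqrt{2/\pi x}\,e^{\pm i(x-\pi/4)}$, so
\[
\Phi_{m,n}(4\pi\sqrt{mn}/c)\;\sim\;e\!\left(\pm\tfrac{2\sqrt{mn}}{c}\right)\int w(t)\,(m/n)^{it}\,\Psi_\pm(t,m,n,c)\,dt,
\]
with $\Psi_\pm$ slowly varying in $t$ on scale $T$. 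I would then split the $c$-sum at $c=X$. For $c\ge X$, Cauchy--Schwarz together with Lemma \ref{lemma:variantsieve} applied to the phase $f(y)=2\sqrt{yn}/c$ (so that $\sup_y 1/|f'(y)|\asymp c\sqrt{N}$) handles the inner oscillatory sum in $m$; summing over $c\ge X$ produces the middle error term $O((NT/X)\|\mathcal A\|^2 N^\varepsilon)$.

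For $c<X$ (rename $c$ to $r$), I retain both the Kloosterman sum and the $t$-integral. Rescaling $t=T(1+u/(2\pi))$ and restricting to $|u|\le T^{-\varepsilon}$ (beyond which repeated integration by parts, exploiting the $e^{-\pi t/T}$-decay and the $(m/n)^{it}$-oscillation, gives arbitrary power savings) yields the outer weight $T$, the phase $e(un/(rT))$, and the short integration range in \eqref{eq:S1bound}. The Kloosterman sum is rewritten as $S(k,n;r)$ by Fourier-expanding the smooth $m$-dependence of $\Psi_\pm$ and $a_m$-independent factor: the variable $k$ parametrizes the frequency dual to $m$, and the constraint $|k|\ll rT^\varepsilon$ reflects the rapid decay of the resulting Fourier coefficients.

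The main obstacle is the uniform Bessel/stationary-phase analysis across the transition range $c\sim N/T$ --- precisely where Luo's argument suffers its $N^{5/4}$ loss in \eqref{eq:Luo} --- and the reindexing $(m,c)\mapsto (k,r)$ needed to produce the Kloosterman sum $S(k,n;r)$ with the precise support $|k|\ll rT^\varepsilon$ appearing in \eqref{eq:S1bound}. Once the asymptotic expansion of $\Phi_{m,n}$ is in place, the split at $c=X$ and the two separate estimates are essentially routine applications of Lemma \ref{lemma:variantsieve} and the Weil bound.
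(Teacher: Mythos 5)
Your proposal misses the crucial cancellation that makes the theorem true, and this is not a technical gap but a structural one. You assert that the diagonal and Eisenstein pieces from Kuznetsov satisfy $D + E \ll T^2 \|\mathcal{A}\|^2 N^{\varepsilon}$, but the Eisenstein contribution $T(\mathcal{A})$ is \emph{not} bounded by $T^2\|\mathcal{A}\|^2$ in the range $N \gg T$: already the $r=1$ term in $\sigma(m,n)$ shows $T(\mathcal{A}) \gg T|\sum_n a_n|^2$, which can be as large as $TN\|\mathcal{A}\|^2$. This is exactly why the paper carries $T(\mathcal{A})$ on the left side of the Kuznetsov identity instead of dumping it into the error, and why Luo's Proposition 1 (evaluating $T(\mathcal{A})$ explicitly as $\frac{1}{\pi\tan\delta}\sum_{m,n}a_m a_n \sigma(m,n) + O((N+T^2)N^\varepsilon\|\mathcal{A}\|^2)$) is indispensable. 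The off-diagonal Kloosterman sum, after the split $f=f_A+f_B$ (Hankel inversion) and $Q=2\cos\delta\,Q_{00}+O(NT/X)$, must be further decomposed by Poisson summation in the auxiliary variable $q$ coming from the $V_{-q}(m,n;r)$ sums; the zero frequency $Q_{000}$ then reproduces the $\sigma(m,n)$ sum and \emph{cancels} $T(\mathcal{A})$ (Lemma \ref{lemma:Q000}), leaving only the nonzero frequencies $Q_{001}=L(\mathcal{A};X)$, which become $S_1(\mathcal{A};X)$. Without identifying and exploiting this cancellation, an argument that bounds each piece separately cannot get below the trivially true $O(NT)$ for the Eisenstein part.

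Your proposal also misidentifies the origin of the Kloosterman sum $S(k,n;r)$ in \eqref{eq:S1bound}. You suggest $k$ parametrizes the frequency dual to $m$ from Fourier-expanding the smooth $m$-dependence of the Bessel asymptotics, but in fact the modulus $r$ is the original Kloosterman modulus $c$ (truncated to $c<X$), and $k$ arises from Poisson summation on the variable $q$ appearing in $Q_{00}(\mathcal{A})$ (which itself comes from the $f_A$ piece of Luo's decomposition), with the smooth weight $\eta(q)$ inserted precisely so that the dual sum truncates at $|k|\ll rT^\varepsilon$; the twisted sums $V_{-q}(m,n;r)$ then collapse under $\sum_a V_{-a}(m,n;r)e(ak/r)=S(k,m;r)S(k,n;r)$, and the Fourier separation in Section 7 acts on the resulting weight $W_{A,B}(m-n)$, not on the Bessel function directly. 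Your heuristic stationary-phase/Bessel analysis is reasonable for reproducing Luo's error exponents, but it does not produce the precise structure of $S_1(\mathcal{A};X)$ needed for the subsequent Voronoi step in Section \ref{section:Voronoi}.
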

Remarks.
\begin{itemize}
 \item The similarity of the term $S_1(\mathcal{A};X)$ to the main term of \cite{IL}, Theorem 1.1 is striking, though there are significant differences.  In particular, no Bessel function appears in \eqref{eq:S1bound}, and to create a bilinear form we had to use the Fourier method to separate variables (whence there is some possible loss in this representation).  It is unclear if $S(\mathcal{A})$ contains a lower-order main term (for special choices of $a_n$ of course) as in \cite{IL}.

 \item 
For our application to Theorem \ref{thm:mainthm}, the error term of \eqref{eq:SS1} is sufficient.  In Section \ref{section:Voronoi} we use special properties of $a_n$ to estimate $S_1(\mathcal{A};X)$, using $GL(3)$ Voronoi summation.
\item We can recover \eqref{eq:Luo} from \eqref{eq:SS1} sketched as follows.  Use $|k|^{-1} \leq 1$ and extend the sum over $k$ to $\ll T^{\varepsilon}$ complete sums modulo $r$.  Opening the square and computing the sum over $k$ shows that \eqref{eq:S1bound} is
\begin{equation}
\ll T^{1 + \varepsilon} \sum_{r < X} \sumstar_{h \shortmod{r}} \int_{-1}^{1} \left| \sum_n a_n \e{hn}{r} \e{un}{rT}\right|^2 du, 
\end{equation}
which is bounded by $T^{1 + \varepsilon} (X^2 + XT) \sum_n |a_n|^2$ by Lemma \ref{lemma:variantsieve}.
If $N \leq T^3$ then choosing $X = \sqrt{N/T} \leq T$ and then replacing $T$ by $T + \sqrt{N}$ (noting that $S(\mathcal{A})$ is increasing in $T$) gives \eqref{eq:Luo}, while if $N > T^3$ then we take $X = T$ and replace $T$ by $N^{3/8} + T$.

\end{itemize}
\subsection{Summary of Luo's results}
In this section we briefly summarize Luo's results from \cite{Luo}, referring to this well-written paper for proofs.

The expression corresponding to $S(\mathcal{A})$ arising from the continuous spectrum is
\begin{equation}
 T(\mathcal{A}) = \frac{1}{4 \pi} \intR w(t) \left| \sum_n a_n \eta_t(n) n^{it}\right|^2 dt,
\end{equation}
where $\eta_t(n) = \sum_{ab = n} \leg{a}{b}^{it}$.  Luo shows directly \cite[Proposition 1]{Luo} that if $N \gg T$ then
\begin{equation}
\label{eq:TA}
 T(\mathcal{A}) = \frac{1}{\pi \tan{\delta}} \sum_m \sum_n a_m a_n \sigma(m,n) + O((N + T^2)N^{\varepsilon} \|\mathcal{A}\|^2),
\end{equation}
where $\|\mathcal{A}\|^2 = \sum_n |a_n|^2$, and
\begin{equation}
 \sigma(m,n) = \sum_{r=1}^{\infty} r^{-2} S(0,m;r) S(0,n;r).
\end{equation}
The term $r=1$ shows $T(\mathcal{A}) \gg T |\sum_n a_n|^2$ which can be $\gg TN \|\mathcal{A}\|^2$ for certain sequences, so it is important to separate the evaluations from the discrete and continuous spectra.  In the case where $N \ll T$ then an earlier result implicit in \cite{DI} (via the proof of their Theorem 6) furnishes the estimate
\begin{equation}
 S(\mathcal{A}) + T(\mathcal{A}) \ll T^{2 + \varepsilon} \|\mathcal{A}\|^2,
\end{equation}
so we henceforth suppose $N \gg T$.

The Kuznetsov formula expresses $S(\mathcal{A}) + T(\mathcal{A})$ as a sum of Kloosterman sums (plus a ``diagonal'' term), say
\begin{equation}
\label{eq:Kuznetsov}
 S(\mathcal{A}) + T(\mathcal{A})  = \frac{2}{\pi^2} T^2 \|\mathcal{A}\|^2 + \text{Re}(P(\mathcal{A})) + O(\|\mathcal{A}\|^2),
\end{equation}
where for a certain integral transform $f_A$ of $w$,
\begin{equation}
 P(\mathcal{A}) = \sum_{m,n} a_m a_n \sum_{c=1}^{\infty} \frac{S(m,n;c)}{c} f_{A}\leg{4 \pi \sqrt{mn}}{c}.
\end{equation}
The Hankel inversion formula gives $f = f_A + f_B$, and correspondingly
\begin{equation}
\label{eq:P=Q-R}
P(\mathcal{A}) = Q(\mathcal{A}) - R(\mathcal{A}).
\end{equation}
By \cite[(47)]{Luo},
\begin{equation}
\label{eq:RA}
 R(\mathcal{A}) \ll (N + T^2 + \frac{N^{3/2}}{T}) N^{\varepsilon} \|\mathcal{A}\|^2.
\end{equation}
Furthermore,
\begin{equation}
\label{eq:QQ00}
 Q(\mathcal{A}) = 2 \cos(\delta) Q_{00}(\mathcal{A}) + O(\frac{NT}{X} N^{\varepsilon}\|\mathcal{A}\|^2),
\end{equation}
where $\delta = (2T)^{-1}$ and $1 \leq X \leq T$ is a parameter to be chosen later.  Here
\begin{equation}
\label{eq:Q00''}
 Q_{00}(\mathcal{A}) = \sum_{m \neq n} a_m a_n |m-n| \sum_{r < X} r^{-2} \sum_{q=1}^{\infty} q^{-2} V_{-q}(m,n;r) \exp\leg{-y}{qr},
\end{equation}
with $y = 2 \pi |m-n| \sin{\delta}$ and
\begin{equation}
 V_{d}(m,n;r) = \sum_{\substack{s \shortmod{r} \\  (s(d+s),r) = 1}} \e{m\overline{s} - n(\overline{d+s})}{r},
\end{equation}
in the recent notation of \cite{IL}.  Next Luo uses the Euler-Maclaurin formula to the sum over $q$ modulo $r$  appearing in \eqref{eq:Q00''}.  The main term obtained by replacing the sum by an integral gives an expression that largely cancels the sum of Ramanujan sums appearing in \eqref{eq:TA}.  He estimates the remainder term using the large sieve and obtains \eqref{eq:Luo}.  To improve on Luo's estimate, we will analyze $Q_{00}(\mathcal{A})$ using the Fourier method with inspiration from \cite{IL}.

\subsection{Development of $Q_{00}(\mathcal{A})$}
We use Poisson summation on the sum over $q$ modulo $r$.  Actually for important technical reasons we first introduce a smooth nondecreasing weight function $\eta(q)$ satisfying $\eta(t) = 1$ for $t \geq 1$ and $\eta(t) = 0$ for $t \leq 1/2$ (which does not alter the sum of course). Hence
\begin{multline}
 \sum_{q=1}^{\infty} q^{-2} \eta(q) V_{-q}(m,n;r) \exp\leg{-y}{qr} 
\\
= \frac{1}{r} \sum_{a \shortmod{r}} V_{-a}(m,n;r) \sum_{k \in \mz} \e{ka}{r} \int_0^{\infty} t^{-2} \eta(t) \exp\leg{-y}{tr} \e{-kt}{r} dt.
\end{multline}
By a simple direct computation (or see \cite[Lemma 3.1]{IL} for a more general result),
\begin{equation}
 \sum_{a \shortmod{r}} V_{-a}(m,n;r) \e{ak}{r} = S(k,m;r)S(k,n;r).
\end{equation}
Write 
\begin{equation}
\label{eq:Q00decomp}
Q_{00}(\mathcal{A}) = Q_{000}(\mathcal{A}) + Q_{001}(\mathcal{A}),
\end{equation}
corresponding to the terms with $k=0$ and $k \neq 0$, respectively.  We have
\begin{equation}
 Q_{000}(\mathcal{A}) = \sum_{m \neq n} a_m a_n  \sum_{r < X} r^{-2}   S(0,m;r)S(0,n;r) \frac{|m-n|}{r} \int_0^{\infty} t^{-2} \eta(t) \exp\leg{-y}{tr} dt.
\end{equation}
The rest of this section is devoted to proving the following
\begin{mylemma} 
\label{lemma:Q000}
For any $1 \leq X \leq T$ and $N \gg T$, we have
\begin{equation}
\label{eq:Q000}
 2 \cos(\delta) Q_{000}(\mathcal{A}) = T(\mathcal{A}) + O\left(T^{2} + \frac{NT}{X} \right)N^{\varepsilon} \|\mathcal{A}\|^2.
\end{equation}
\end{mylemma}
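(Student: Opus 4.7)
The plan is to evaluate the inner $t$-integral, recognize the resulting Ramanujan-sum expression as $T(\mathcal{A})$ modulo lower-order errors, and then separately control each error source.

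I would first make the substitution $u = 1/t$ in the $t$-integral, obtaining
\[
\int_0^\infty t^{-2} \eta(t) \exp\left(\frac{-y}{tr}\right) dt = \int_0^\infty \eta(1/u) e^{-yu/r} du = \frac{r}{y} J(y/r),
\]
where $J(z) := \int_0^\infty \eta(z/v) e^{-v} dv$. From the support of $\eta$ one checks $J(z) = 1 + O(z e^{-z})$ as $z \to \infty$ and $J(z) = O(z)$ as $z \to 0$; writing $J = 1 - K$, the function $K(y/r)$ is exponentially small whenever $r \ll y$ and bounded by $1$ otherwise. Using $|m-n|/y = 1/(2\pi \sin\delta)$ (since $y = 2\pi|m-n|\sin\delta$), the integral factor combines with the weight $|m-n|/r$ to give
\[
Q_{000}(\mathcal{A}) = \frac{1}{2\pi\sin\delta}\sum_{m \neq n} a_m a_n \sum_{r<X}\frac{S(0,m;r)S(0,n;r)}{r^2} - \frac{1}{2\pi\sin\delta}\sum_{m\neq n} a_m a_n \sum_{r<X}\frac{S(0,m;r)S(0,n;r)}{r^2} K(y/r).
\]

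For the first term I would extend $\sum_{r<X}$ to $\sum_{r \geq 1}$ and add in the diagonal $m = n$, producing $\frac{1}{2\pi\sin\delta}\sum_{m,n} a_m a_n \sigma(m,n)$. The tail, controlled via $\sum_{r\geq X} S(0,m;r)S(0,n;r)/r^2 \ll (mn)^\varepsilon/X$ (standard estimate for the Ramanujan-sum series at $s = 2$) together with $\sum_{m,n}|a_m a_n| \leq N\|\mathcal{A}\|^2$, contributes $\ll (NT/X) N^\varepsilon \|\mathcal{A}\|^2$. The diagonal $\sum_m a_m^2 \sigma(m,m)$, using $\sigma(m,m) \ll m^\varepsilon$, contributes $\ll T N^\varepsilon \|\mathcal{A}\|^2$. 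Using $(2\cos\delta)/(2\pi\sin\delta) = 1/(\pi\tan\delta)$ then matches the main term with $T(\mathcal{A})$ via \eqref{eq:TA}, up to errors of the required size.

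The main obstacle is bounding the $K$-weighted error. Here I would split the inner sum over $r$ at $r \asymp y$: for $r \leq y/\log(NT)$, the exponential bound $K(y/r) \ll e^{-y/r}$ renders the contribution negligible; for $r > y/\log(NT)$ I use $|K(y/r)| \leq 1$ combined with a windowed version of the above tail estimate, $\sum_{r \gtrsim y} S(0,m;r)S(0,n;r)/r^2 \ll (mn)^\varepsilon \log(NT)/y$, giving
\[
\left|\sum_{r<X} \frac{S(0,m;r)S(0,n;r)}{r^2} K(y/r)\right| \ll \frac{(mn)^\varepsilon \log N}{y}.
\]
Since $1/(y \sin\delta) \sim T^2/|m-n|$, the $K$-factor error is bounded by $T^2 N^\varepsilon \sum_{m \neq n} |a_m a_n|/|m-n|$; setting $h = n - m$ and applying $\sum_m |a_m a_{m+h}| \leq \|\mathcal{A}\|^2$ (AM-GM) followed by $\sum_{0 \neq |h| \leq 2N} 1/|h| \ll \log N$, this is $\ll T^2 N^\varepsilon \|\mathcal{A}\|^2$, fitting inside the allowed error $O((T^2 + NT/X) N^\varepsilon \|\mathcal{A}\|^2)$.
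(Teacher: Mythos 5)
Your overall decomposition matches the paper's: your $J(y/r) = 1 - K(y/r)$ split is exactly the paper's split of $\eta(t) = 1 + (\eta(t)-1)$, and your $K$-weighted error is precisely the paper's $Q_{0001}(\mathcal{A})$. The treatment of the main term (extend $r$ to infinity, pick up the diagonal, identify with $T(\mathcal{A})$ via \eqref{eq:TA}) is also the same. Where you differ substantially is in estimating $Q_{0001}$: the paper separates variables by a Fourier integral, applies Cauchy, opens the Ramanujan sums, and invokes the variant large sieve of Lemma \ref{lemma:variantsieve}; you instead bound $\sum_r c_r(m)c_r(n)r^{-2}K(y/r)$ pointwise in $(m,n)$ by absolute values and then sum over $m,n$ via the substitution $h = n-m$. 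Your route, if the pointwise bounds hold, is genuinely more elementary since it bypasses the large sieve machinery entirely.

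There is, however, an imprecision that you need to repair. Your ``standard estimate'' $\sum_{r > R} |c_r(m) c_r(n)|/r^2 \ll (mn)^\varepsilon / R$ is false in general: take $m = 2p$, $n = 3p$ with $p$ prime and $p > R$; the single term $r = p$ already contributes $1$, while $(mn)^\varepsilon/R$ can be arbitrarily small. The correct uniform estimate carries an extra factor of the gcd: using $(m,r)(n,r) = ((m,n),r)\cdot\mathrm{lcm}((m,r),(n,r))$ one gets
\begin{equation*}
\sum_{r > R} \frac{|c_r(m)c_r(n)|}{r^2} \ll \frac{(m,n)\,(mn)^\varepsilon}{R}.
\end{equation*}
For the main-term tail this is harmless, since $\sum_{m,n}|a_ma_n|(m,n) \ll N^{1+\varepsilon}\|\mathcal{A}\|^2$ (which is what the paper uses). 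For the $K$-error you now need $\sum_{m\neq n}|a_ma_n|\,(m,n)/|m-n| \ll N^\varepsilon\|\mathcal{A}\|^2$, and this does in fact hold, but not by the naive $\sum_{|h|}1/|h|$ argument you gave: one observes $(m,n) = (m,h)\mid h$ where $h=n-m$, sets $b_d = \sum_{d\mid m}|a_m|^2$, uses Cauchy to get $\sum_{d\mid m}|a_ma_{m+h}|\leq b_d$ (valid since $d\mid h$), then $\sum_h h^{-1}\sum_{d\mid h}d\,b_d \ll \log N\sum_d b_d = \log N\sum_m d(m)|a_m|^2 \ll N^\varepsilon\|\mathcal{A}\|^2$. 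With this correction your elementary argument does close, and it is a nice observation that Lemma \ref{lemma:variantsieve} is not strictly needed for this particular lemma.
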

By gathering the estimates \eqref{eq:TA}, \eqref{eq:Kuznetsov}, 
\eqref{eq:RA}, \eqref{eq:QQ00}, \eqref{eq:Q00decomp}, \eqref{eq:Q000}, and renaming $Q_{001}(\mathcal{A}) = L(\mathcal{A};X)$, 
we obtain 
\begin{myprop}
\label{prop:SA}
For any $1 \leq X \leq T$ and $N \gg T$, we have
\begin{equation}
 S(\mathcal{A}) = 
S_1(\mathcal{A};X)
+ O\left(T^{2} + \frac{NT}{X}+ \frac{N^{3/2}}{T}\right) N^{\varepsilon} \|\mathcal{A}\|^2,
\end{equation}
where $S_1(\mathcal{A};X) = 2 \cos(\delta) L(\mathcal{A};X)$, and
\begin{multline}
\label{eq:LAX}
 L(\mathcal{A};X) = \sum_{m \neq n} a_m a_n \sum_{r < X} r^{-2} \sum_{k \neq 0} \frac{|m-n|}{r} S(k,m;r)S(k,n;r) 
\\
\int_0^{\infty} t^{-2} \eta(t) \exp\leg{-2\pi |m-n| \sin{\delta}}{tr} \e{-kt}{r} dt.
\end{multline}
\end{myprop}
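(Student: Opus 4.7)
The proposition is essentially a bookkeeping consequence of the six identities catalogued just above its statement, together with Lemma \ref{lemma:Q000}. My plan is to thread them through the Kuznetsov identity \eqref{eq:Kuznetsov} in the following order: substitute $P(\mathcal{A})=Q(\mathcal{A})-R(\mathcal{A})$ from \eqref{eq:P=Q-R} and absorb $R(\mathcal{A})$ into the error via \eqref{eq:RA}; replace $Q(\mathcal{A})$ by $2\cos(\delta)Q_{00}(\mathcal{A})$ using \eqref{eq:QQ00}; split $Q_{00}=Q_{000}+Q_{001}$ via \eqref{eq:Q00decomp}; and finally invoke Lemma \ref{lemma:Q000} to rewrite $2\cos(\delta)Q_{000}(\mathcal{A})$ as $T(\mathcal{A})$ plus an $O((T^{2}+NT/X)N^{\varepsilon}\|\mathcal{A}\|^{2})$ error.

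After these substitutions the $T(\mathcal{A})$ terms on both sides of \eqref{eq:Kuznetsov} cancel, and the diagonal $\tfrac{2}{\pi^{2}}T^{2}\|\mathcal{A}\|^{2}$ from Kuznetsov is absorbed into the $T^{2}N^{\varepsilon}\|\mathcal{A}\|^{2}$ error in the final statement. Renaming $L(\mathcal{A};X):=Q_{001}(\mathcal{A})$ then produces $S_{1}(\mathcal{A};X)=2\cos(\delta)L(\mathcal{A};X)$ as required. The explicit formula \eqref{eq:LAX} for $L(\mathcal{A};X)$ is read directly off the Poisson-summation identity derived at the start of this subsection: the $k\neq 0$ contribution to the sum over $q$ yields the $t$-integral against $e(-kt/r)\exp(-y/(tr))$ with $y=2\pi|m-n|\sin\delta$, combined with the Kloosterman identity $\sum_{a}V_{-a}(m,n;r)e(ak/r)=S(k,m;r)S(k,n;r)$ already recorded in the excerpt.

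The genuinely new input, and hence the main obstacle, is Lemma \ref{lemma:Q000} itself. My plan there would be first to evaluate the $t$-integral by the substitution $u=y/(tr)$, converting it to $(r/y)\int_{0}^{\infty}\eta(y/(ur))e^{-u}\,du=:(r/y)I(y/r)$; combined with the prefactor $|m-n|/r$ the variable $y$ collapses, producing the constant $1/(2\pi\sin\delta)$, which after the $2\cos(\delta)$ multiplier becomes the $1/(\pi\tan\delta)$ appearing in \eqref{eq:TA}. It then remains to (i) extend the truncation $r<X$ to $r\geq 1$ so as to recover $\sigma(m,n)$, (ii) replace $I(y/r)$ by $1$, and (iii) restore the diagonal $m=n$. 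Steps (i) and (iii) should be routine using the bound $|S(0,n;r)|\leq(n,r)$ together with the additive large sieve \eqref{eq:largesieve}, contributing errors of size $O(NT/X)$ and $O(T^{2}+N)$ respectively. I expect (ii) to be the trickiest: in the transitional regime $y/r\asymp 1$ the smooth cutoff $\eta$ obstructs the clean Ramanujan-sum cancellation, and one has to argue via a Mellin-type analysis paired with the large sieve to absorb the defect into the stated $T^{2}+NT/X$ error.
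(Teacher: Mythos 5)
Your bookkeeping—threading \eqref{eq:Kuznetsov}, \eqref{eq:P=Q-R}, \eqref{eq:RA}, \eqref{eq:QQ00}, \eqref{eq:Q00decomp}, and Lemma~\ref{lemma:Q000} together, cancelling $T(\mathcal{A})$, absorbing $\tfrac{2}{\pi^2}T^2\|\mathcal{A}\|^2$, and renaming $Q_{001}(\mathcal{A})$ as $L(\mathcal{A};X)$—is exactly the paper's proof of Proposition~\ref{prop:SA}. Your sketch of Lemma~\ref{lemma:Q000} also follows the paper's route (extract the $r/y$ main term from the $t$-integral to match $T(\mathcal{A})$, then estimate the correction $Q_{0001}$ by separating variables and applying the large sieve), the only cosmetic difference being that the paper separates $m$ from $n$ via a Fourier integral rather than a Mellin-type argument.
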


\begin{proof}[Proof of Lemma \ref{lemma:Q000}]
We approximate the weight function with simple manipulations, letting $\eta(t^{-1}) - 1 = \psi(t)$:
\begin{align}
 \int_0^{\infty} t^{-2} \eta(t)  \exp\leg{-y}{tr} dt 
&= \int_0^{\infty} t^{-2}  \exp\leg{-y}{tr} dt + \int_0^{\infty} t^{-2} (\eta(t)-1) \exp\leg{-y}{tr} dt
\\
&= \frac{r}{y} + \int_1^{\infty} \psi(t) \exp\leg{-yt}{r} dt = \frac{r}{y} + \frac{r}{y}\int_1^{2} \psi'(t) \exp\leg{-yt}{r} dt.
\end{align}
Write $Q_{000}(\mathcal{A}) = Q_{0000}(\mathcal{A}) + Q_{0001}(\mathcal{A})$ accordingly, so that
we have for the main term 
\begin{multline}
Q_{0000}(\mathcal{A}) = \frac{1}{2 \pi \sin{\delta}} \sum_{m \neq n} a_m a_n \sum_{r < X} r^{-2} S(0,m;r) S(0,n;r) 
\\
= \frac{1}{2 \pi \sin{\delta}} \sum_{r=1}^{\infty} r^{-2} \sum_{m,  n} a_m a_n  S(0,m;r) S(0,n;r) + O(\frac{NT}{X} N^{\varepsilon}\|\mathcal{A}\|^2),
\end{multline}
estimating the tail of the sum with
\begin{equation}
 \sum_m \sum_n (m,n) |a_m| |a_n| \ll N^{1 + \varepsilon} \sum_n |a_n|^2.
\end{equation}

Notice that this sum, times $2 \cos{\delta}$, gives the same sum of Ramanujan sums appearing in \eqref{eq:TA}.
Furthermore notice
\begin{equation}
Q_{0001}(\mathcal{A}) =  \int_1^{2} \frac{\psi'(t)}{2 \pi \sin{\delta}}  \sum_{r < X} r^{-2}  \sum_{m \neq n} a_m a_n S(0,m;r) S(0,n;r) \exp\left(\frac{- 2\pi |m-n| \sin{\delta} }{r} t \right) dt.
\end{equation}
In the following subsection we show $Q_{0001}(\mathcal{A}) \ll  T^{2}N^{\varepsilon} \|\mathcal{A}\|^2$.  Given this estimate, clearly Lemma \ref{lemma:Q000} follows. \end{proof}

\subsection{Estimation of $Q_{0001}(\mathcal{A})$}
\label{section:6.3}
The basic idea is to use the large sieve after separating variables.  
First note that extending the sum to include $m=n$ using $|S(0,m;r)| \leq r$ gives an acceptable error term. Divide the sum over $r$ into dyadic intervals, say $R < r \leq 2R$ and then apply the change of variables $t \rightarrow rt /(RT \sin{\delta})$ to reduce to estimating
\begin{equation}
\frac{T}{R^2} \sum_{R = 2^j \leq X} \sum_{R < r \leq 2R} \int_{1/8}^{1} \left|\sum_{m, n} a_m a_n S(0,m;r) S(0,n;r) \exp\left(\frac{- 2\pi |m-n| }{RT} t \right) \right|dt.
\end{equation}
Then separate the variables $m$ and $n$ using the Fourier integral
\begin{equation}
 \exp(-2\pi|x|) = \frac{1}{\pi} \intR e(xv) \frac{1}{1 + v^2} dv.
\end{equation}
Thus using Cauchy's inequality, the estimation of $Q_{0001}(\mathcal{A})$ reduces to the estimation of
\begin{equation}
\label{eq:Q0001}
\sum_{R =2^j \leq X} \frac{T}{R^2}  \intR \frac{1}{1 + v^2}   \sum_{R < r \leq 2R}   \left|\sum_{m}  a_m S(0,m;r) e^{\frac{imv}{RT}} \right|^2 dv. 
\end{equation}
Next open the Ramanujan sum and apply Cauchy's inequality to obtain that 
\begin{align}
 \sum_{R < r \leq 2R} \left|\sum_{m}  a_m S(0,m;r) \right|^2 &\leq 
\sum_{R < r \leq 2R} r \sumstar_{h \shortmod{r}} \left|\sum_{m} a_m  \e{hm}{r} \right|^2.
\end{align}
Inserting this into \eqref{eq:Q0001} and applying Lemma \ref{lemma:variantsieve} easily gives the desired estimate of $Q_{0001}(\mathcal{A}) \ll T^{2}N^{ \varepsilon} \|\mathcal{A}\|^2$.

\section{Separation of variables}
In order to deduce Theorem \ref{thm:largesieve} from Proposition \ref{prop:SA}, we need to develop the properties of the weight function appearing in \eqref{eq:LAX}, a task we perform presently.

The weight function $\eta$ plays a role in effectively truncating the sum over $k$ appearing in $L(\mathcal{A};X)$.  Indeed, if one replaces $\eta(t)$ by $1$, then the Fourier transform appearing in \eqref{eq:LAX} can be written in terms of the $K_1$-Bessel function (see  3.324.1 of \cite{GR}), which would truncate $k$ at a position much larger than if one retains $\eta(t)$.  Generally speaking, using $\eta$ simplifies various arguments by regularizing the weight functions at a singularity.

\subsection{Estimates for the weight function}
By a change of variables, we have
\begin{multline}
\label{eq:weightfunction}
 \frac{|m-n|}{r} \int_0^{\infty} t^{-2} \eta(t) \exp\leg{-2\pi |m-n| \sin{\delta}}{tr} \e{-kt}{r} dt 
\\
= \frac{1}{2 \pi \sin{\delta}} \int_0^{\infty} t^{-2} \eta\leg{2\pi |m-n|t \sin{\delta}   }{r} e^{-t^{-1}} \e{-2\pi k \sin{\delta} |m-n| t}{r^2} dt.
\end{multline}
Let $x = m-n$, $A = r/(2\pi \sin{\delta})$ and $B = r^2/(2\pi k \sin{\delta} )$, so that \eqref{eq:weightfunction} takes the form
$(2 \pi \sin{\delta})^{-1}W_{A,B}(x)$,
where
\begin{equation}
\label{eq:Wdef}
 W_{A,B}(x) = \int_0^{\infty} t^{-2} \eta\leg{|x|t}{A} e^{-t^{-1}} \e{-|x|t}{B} dt.
\end{equation}
We will use the Fourier method to separate the variables $m$ and $n$ in $W(x)$.  More specifically, we shall show the following estimates in Section \ref{section:fourier}.  We have
\begin{equation}
\label{eq:7.3}
\frac{1}{A} \widehat{W}_{A,B}\leg{u}{A} \ll \min\left(\frac{1}{|u|}, \frac{|B|/A}{1 + u^2} \right),
\end{equation}
that 
\begin{equation}
\label{eq:7.4}
 W_{A,B}(x) =\intR \widehat{W}_{A,B}(u) e(ux) du = \intR \frac{1}{A} \widehat{W}_{A,B}\leg{u}{A} \e{ux}{A} du.
\end{equation}
and that for any nonnegative integer $K$,
\begin{equation}
\label{eq:7.5}
 W_{A,B}(x) \ll_K \left(1 + \frac{A+|x|}{|B|} \right)^{-K}.
\end{equation}

\subsection{Deducing Theorem \ref{thm:largesieve} from Proposition \ref{prop:SA}}
Taking the estimates \eqref{eq:7.3}-\eqref{eq:7.5} temporarily for granted, we now complete the proof of Theorem \ref{thm:largesieve}.

In our case, $A/B = k/r$ so in practice we may assume $k \ll r N^{\varepsilon}$.
Thus we have
\begin{multline}
 L(\mathcal{A};X) =  \frac{1}{2\pi \sin{\delta}} \sum_{m \neq n} a_m a_n \sum_{r < X} r^{-2} \sum_{0 \neq k \ll r N^{\varepsilon}} S(k,m;r)S(k,n;r) 
\\
 \intR A^{-1} \widehat{W}_{A,B}\leg{u}{A} \e{-2\pi \sin{\delta} (m-n)u}{r} du + O(N^{-2008} \|\mathcal{A}\|^2).
\end{multline}
Using the Weil bound shows that we can extend the summation to $m=n$ with an error of size $\ll XT^{1+\varepsilon} \|\mathcal{A}\|^2$.  Thus we have
\begin{multline}
\label{eq:7.8}
 L(\mathcal{A};X) \ll  T  \sum_{r < X} r^{-2} \sum_{0 \neq k \ll r T^{\varepsilon}} 
\intR A^{-1} \left|\widehat{W}\leg{u}{A}\right|  
\left| \sum_{ n}  a_n S(k,n;r) \e{2\pi \sin{\delta}u n}{r} \right|^2 du 
\\
+ O(XT N^{\varepsilon} \|\mathcal{A}\|^2).
\end{multline}
Let $L_1(\mathcal{A};X)$ be the first term of \eqref{eq:7.8}.  Changing variables and using \eqref{eq:7.3} gives
\begin{equation}
 L_1(\mathcal{A};X) \ll T \sum_{r < X} r^{-2} \sum_{0 \neq k \ll r N^{\varepsilon}}  \intR \min\left(\frac{1}{|u|}, \frac{r/|k|}{1 + u^2} \right)\left| \sum_{ n}  a_n S(k,n;r) \e{u n}{r T} \right|^2 du.
\end{equation}
For $|u| \leq T^{-\varepsilon}$ we bound the minimum by the latter term, which gives the right hand side of \eqref{eq:S1bound}, so it suffices to show that the contribution from the rest of the region of integration is accounted by the error term of Theorem \ref{thm:largesieve}. 

The integration for $|u| \geq XT^{1 + \varepsilon}$ picks out the diagonal (with a negligible error), giving
\begin{equation}
 \ll T \sum_{r < X} r^{-1} \sum_{0 \neq k \ll r N^{\varepsilon}} \frac{1}{|k|} \frac{1}{XT} \sum_n |a_n S(k,n;r)|^2 \ll N^{\varepsilon} \sum_n |a_n|^2,
\end{equation}
using Weil's bound.  For $T^{-\varepsilon} \leq |u| \leq XT^{1 + \varepsilon}$, dissect the region of integration into $\ll \log{N}$ dyadic intervals $U \leq |u| \leq 2U$, getting for such a dyadic interval
\begin{equation}
\label{eq:8.10} 
T \sum_{r < X} r^{-2} \sum_{0 \neq k \ll r N^{\varepsilon}} \frac{1}{U} \int_{U \leq |u| \leq 2U} \left| \sum_n a_n S(k,n;r) \e{un}{rT} \right|^2 du.
\end{equation}
Extending the sum over $k$ to $\ll N^{\varepsilon}$ complete sums modulo $k$, opening the square, opening the Kloosterman sums, and executing the summation over $k$ gives that \eqref{eq:8.10} is
\begin{equation}
 \ll T \sum_{r < X} r^{-1} \sumstar_{h \shortmod{r}} \frac{1}{U} \int_{U \leq |u| \leq 2U} \left| \sum_n a_n \e{hn}{r} \e{un}{rT} \right|^2 du,
\end{equation}
which by Lemma \ref{lemma:variantsieve} is
\begin{equation}
 \ll (XT + U^{-1}T^2) N^{\varepsilon} \sum_n |a_n|^2.
\end{equation}
This proves Theorem \ref{thm:largesieve}.

\subsection{Proofs of the estimates on $W_{A,B}(x)$}
\label{section:fourier}
\begin{proof}[Proof of \eqref{eq:7.3}-\eqref{eq:7.5}]
 It is immediate from the definition \eqref{eq:Wdef} that $W(x)$ is continous on $\mr$.  An integration by parts argument will show presently that $W(x)$ is smooth except at the point $x=0$ (due to the absolute value) with rapid decay for $x$ large.  Precisely, we have
\begin{equation}
\label{eq:7.9}
 W_{A,B}(x) = \leg{B}{2 \pi i |x|}^K \int_0^{\infty} \frac{\partial^K}{\partial t^K}\left[t^{-2} \eta\leg{|x|t}{A} e^{-t^{-1}}  \right] \e{-|x|t}{B} dt.
\end{equation}
An exercise with the generalized product rule for derivatives (Leibniz's formula) shows that the term $\frac{\partial^K}{\partial t^K} [ \dots ]$ can be expressed as a sum of functions of the form
\begin{equation}
 t^{-2-K} \nu\leg{|x|t}{A} e^{-t^{-1}} P(t^{-1}),
\end{equation}
where $\nu(y) = y^{j} \eta^{(j)}(y)$ for some $j \leq K$, and $P(y)$ is a polynomial.  Using that $\nu(y) = 0$ for $y \leq \half$, we estimate the integral \eqref{eq:7.9} with absolute values to obtain
\begin{equation}
 W_{A,B}(x) \ll \leg{B}{|x|}^K \left(1 + \frac{A}{|x|}\right)^{-1 - K} = \frac{|x|}{A+ |x|} \leg{|B|}{A + |x|}^K,
\end{equation}
which implies 
\eqref{eq:7.5} by taking $K$ large only if $A + |x| \geq |B|$, and by taking $K=0$ otherwise. 

It is clear that $W(x)$ is absolutely integrable, and \eqref{eq:7.3} will show that its Fourier transform is also absolutely integrable, and hence by continuity $W$ can be recovered via \eqref{eq:7.4}.

Now we compute the Fourier transform of $W$ by somewhat direct calculation.  Since $W$ is even, we have
\begin{equation}
 \widehat{W}_{A,B}(u) = \int_0^{\infty} \left[ e(ux) + e(-ux)\right] W_{A,B}(x) dx.
\end{equation}
Write $\widehat{W}(u) = \widehat{W}_{+}(u) + \widehat{W}_{-}(u)$ correspondingly.  Changing variables by $t \rightarrow \frac{t}{x}$ in \eqref{eq:Wdef} gives
\begin{equation}
\label{eq:7.13}
 \widehat{W}_{+}(u) = \int_0^{\infty} e(ux) \int_0^{\infty} t^{-1} \eta\leg{t}{A} \frac{x}{t} \exp\left(-\frac{x}{t}\right) \e{-t}{B} dt dx.
\end{equation}
Letting $F(y) = y e^{-y}$, we get that this takes the form
\begin{equation}
  \widehat{W}_{+}(u) = \int_0^{\infty} e(ux) \int_0^{\infty} t^{-1} \eta\leg{t}{A} F\leg{x}{t} \e{-t}{B} dt dx.
\end{equation}
Our next goal is to reverse the order of integrations and directly compute the $x$-integral.  Unfortunately, the double integral does not converge absolutely so it takes some more involved arguments to justify this interchange (repeated integration by parts).  Let
\begin{equation}
 f(x,t) = t^{-1} \eta\leg{t}{A} F\leg{x}{t},
\end{equation}
and note that its $j$-th partial derivative with respect to $t$ 
is a sum of functions of the form
\begin{equation}
 t^{-1-j} \nu\leg{t}{A} G\leg{x}{t} P\leg{1}{t},
\end{equation}
where $\nu(y) = y^{i} \eta^{(i)}(y)$ for some $i \leq j$, $G(y)$ is a polynomial times $e^{-y}$, and $P(y)$ is a polynomial.  In particular, 
\begin{equation}
 \frac{\partial^3}{\partial t^3} f(x,t) \ll \frac{1}{(1 + t^2)(1 + x^2)}.
\end{equation}
Then after three times integration by parts we can reverse the orders of integration, getting
\begin{equation}
 \widehat{W}_{+}(u) = \int_0^{\infty} \frac{\e{-t}{B}}{(2 \pi i/B)^3}  \int_0^{\infty} \frac{\partial^3}{\partial t^3} f(x,t) e(ux) dx dt.
\end{equation}
Since the partial derivatives are continuous and have rapid decay with respect to $x$, it is easy to take the differentiation outside the integral sign, whence
\begin{equation}
 \widehat{W}_{+}(u) = \int_0^{\infty} \frac{\e{-t}{B}}{(2 \pi i/B)^3} \frac{\partial^3}{\partial t^3} \left[t^{-1} \eta\leg{t}{A}  \int_0^{\infty} \frac{x}{t} e^{-\frac{x}{t}} e(ux) dx \right] dt.
\end{equation}
Note that
\begin{equation}
 \int_0^{\infty} \frac{x}{t} e^{-\frac{x}{t}} e(ux) dx = \frac{t}{(1-2 \pi i ut)^2}.
\end{equation}
With this representation it is easy to perform integration by parts backwards to finally get
\begin{equation}
 \widehat{W}_{+}(u) = \int_0^{\infty} \eta\leg{t}{A} \e{-t}{B} \frac{1}{(1 - 2 \pi i ut)^2} dt,
\end{equation}
which is what we would obtain directly from \eqref{eq:7.13} by interchanging the integrals.  Thus we obtain after a change of variables
\begin{equation}
\label{eq:7.23}
\frac{1}{A} \widehat{W}_{A,B}\leg{u}{A} = 2 \int_0^{\infty} \eta(t) \e{-At}{B} \frac{1 - (2\pi ut)^2}{(1 + (2\pi ut)^2)^2}dt.
\end{equation}
Estimating the integral by absolute values shows that \eqref{eq:7.23} is $\ll |u|^{-1}$, while a single integration by parts gives the other estimate of \eqref{eq:7.3}.
\end{proof}

\section{Voronoi summation}
\label{section:Voronoi}
We need to estimate $S_1(\mathcal{A};X)$ in the case where
\begin{equation}
 a_n = \frac{A(l,n)}{\sqrt{n}} w_2\leg{n}{N} = \frac{1}{\sqrt{N}} A(l,n) w_3\leg{n}{N},
\end{equation}
where $w_3(x) = x^{-\half} w_2(x)$ is also smooth and compactly supported on $[N, 2N]$ (actually the previous estimates of Luo required $a_n$ real so we need to consider the real and imaginary parts of $A(l,n)$ separately). For this choice of $\mathcal{A}$, let $S_1(X) = S_1(\mathcal{A};X)$.  Our plan is to apply the $GL(3)$ Voronoi summation formula proved by \cite{MS}.
To this end, let
\begin{equation}
 C=C(k,l,r,u,T) = \frac{1}{\sqrt{N}} \sum_{n} A(l,n) S(k,n;r) w_3\leg{n}{N} \e{un}{r T},
\end{equation}
so that
\begin{equation}
 S_1(X) \ll T \sum_{r < X} r^{-1} \sum_{0 \neq k \ll rN^{\varepsilon}} |k|^{-1} \int_{-T^{-\varepsilon}}^{T^{-\varepsilon}} |C(k,l,r,u,T)|^2 du.
\end{equation}

Note that since the Kloosterman sums are real we could easily reduce the sum of real (or imaginary) parts of $A(m,n)$ to the sum of $A(m,n)$.

It turns out that for the relevant ranges of the variables that $C$ is negligibly small.
\begin{mylemma}
\label{lemma:L1}
 Let $X = (N/l)^{1/3} T^{-\varepsilon}$.  Then
\begin{equation}
\label{eq:L1}
 S_1(X) \ll T^{-2009}.
\end{equation}
\end{mylemma}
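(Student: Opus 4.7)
The plan is to apply the $GL(3)$ Voronoi summation formula of \cite{MS} to the sum over $n$ inside $C$ and to exploit the choice $X = (N/l)^{1/3} T^{-\varepsilon}$ to force the resulting dual sum to be negligible. I would first open the Kloosterman sum $S(k,n;r) = \sumstar_{a \shortmod{r}} e((ka + n\overline{a})/r)$, reducing the problem to estimating, for each $a$ coprime to $r$, the inner sum
$$\Sigma(a) = \sum_n A(l,n)\, e\!\left(\frac{n\overline{a}}{r}\right) g(n), \qquad g(x) = w_3\!\left(\frac{x}{N}\right) e\!\left(\frac{u x}{rT}\right).$$
Because $|u| \leq T^{-\varepsilon}$, the extra phase has very mild derivative $O(1/(rT^{1+\varepsilon}))$, so the Mellin transform $\widetilde{g}$ satisfies $|\widetilde{g}(\sigma+it)| \ll_{\sigma,j} N^{\sigma}\, T^{O(\sigma)}\, (1+|t|)^{-j}$ for every $j \geq 0$ by repeated integration by parts.

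Second, the $GL(3)$ Voronoi formula transforms $\Sigma(a)$ into a dual expression of the schematic shape
$$\Sigma(a) = r \sum_{\pm} \sum_{d \mid lr}\sum_{m \geq 1} \frac{A(m,d)}{d\,m}\, S\!\left(\mp \overline{a}\, l,\, m;\, lr/d\right) G_{\pm}\!\left(\frac{m d^2}{l r^3}\right),$$
where the integral transform $G_{\pm}$ admits the Mellin--Barnes representation
$$G_{\pm}(y) = \frac{1}{2\pi i}\int_{(\sigma)} y^{-s}\, \gamma_{\pm}(s)\, \widetilde{g}(1-s)\, ds,$$
and the kernel $\gamma_{\pm}(s)$, a fixed ratio of $GL(3)$ gamma factors attached to $\phi$, satisfies $|\gamma_{\pm}(\sigma+it)| \ll (1+|t|)^{3\sigma/2 + O(1)}$ in vertical strips by Stirling.

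Third and crucially, I would establish rapid decay of $G_{\pm}(y)$ when $yN$ is large. Shifting the contour to $\Re(s) = A$ for $A$ arbitrarily large (no poles intervene in the right half-plane, since $\widetilde{g}$ is entire), the bounds above yield
$$|G_{\pm}(y)| \ll_{A} (yN)^{-A}\, T^{CA}$$
for some absolute constant $C$. The choice $X = (N/l)^{1/3} T^{-\varepsilon}$ guarantees $l r^{3} \leq N\, T^{-3\varepsilon}$, so for every $m, d \geq 1$ one has $yN \geq N/(l r^{3}) \geq T^{3\varepsilon}$. Taking $A$ large in terms of $\varepsilon$ and $C$ forces $|G_{\pm}(y)| \ll T^{-B}$ for any $B > 0$. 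Combining with Weil's bound on the Kloosterman sums and the Rankin--Selberg-type bound $\sum_{md \leq Y} |A(m,d)|^{2} \ll Y^{1+\varepsilon}$ (the sums over $m$ and $d$ may be truncated at a polynomial-in-$T$ height at negligible cost, since the decay of $G_{\pm}$ is super-polynomial), this gives $|\Sigma(a)| \ll T^{-B}$, and summing over $a \shortmod{r}$ yields $|C| \ll T^{-B}$; plugging into the expression for $S_{1}(X)$ produces \eqref{eq:L1}.

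The main obstacle is ensuring that the gain $(yN)^{-A} \ll T^{-3\varepsilon A}$ from the contour shift dominates both the Stirling growth $T^{CA}$ of $\gamma_{\pm}$ and the $T^{O(\sigma)}$ loss coming from the mild oscillation in $g$. The buffer $T^{-\varepsilon}$ in the definition of $X$ is inserted for precisely this reason: it makes $yN$ polynomially large in $T$, so that $A$ may be chosen (depending only on $\varepsilon$ and $C$) to defeat any fixed polynomial-in-$T$ loss and secure the desired super-polynomial savings.
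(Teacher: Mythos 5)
Your high-level strategy matches the paper's (open the Kloosterman sum, apply $GL(3)$ Voronoi, and show the dual weight is negligibly small), but the route through Mellin--Barnes differs from the paper's, which invokes X.~Li's asymptotic expansion (her Lemma 2.1, descending from Ivi\'c) and then a stationary phase analysis of the oscillatory integral $\int w_3(y) y^{-1/3}\, e(uyN/(rT) + 3(xyN)^{1/3})\,dy$; the paper locates the stationary point $y_0 = x^{1/2}(rT)^{3/2}/(N|u|^{3/2})$ and shows $y_0 \asymp 1$ would force $m_1^2 m_2 \ll T^{-\varepsilon}/l^3 < 1$, an impossibility. Your contour-shift approach is a legitimate alternative, but as written there is a quantitative gap that stops the argument from closing.

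The problem is the step from $|G_\pm(y)| \ll_A (yN)^{-A} T^{CA}$ and $yN \geq T^{3\varepsilon}$ to negligibility. Since $\varepsilon$ is arbitrarily small, $T^{-3\varepsilon A}$ cannot beat $T^{CA}$ unless $C < 3\varepsilon$, but $C$ is a genuine $\varepsilon$-independent positive constant: the inner weight $g_0(y) = w_3(y)\,e(\Phi y)$ on $[1,2]$, with total phase $\Phi = uN/(rT)$, has $|\Phi|$ as large as $T^{-\varepsilon} N/(rT) \asymp T^{1/2-\varepsilon}$ when $l, r$ are small and $N \asymp T^{3/2}$. Consequently the Mellin transform estimate $|\widetilde g(\sigma+it)| \ll N^{\sigma} T^{O(\sigma)}(1+|t|)^{-j}$ is not what integration by parts actually gives: each integration by parts against $e(\Phi y)$ costs a factor $|\Phi|$, so $\widetilde g_0(1-A-it)$ is only small once $|t| \gg |\Phi|$, and the $t$-integral against $|\gamma_\pm(A+it)| \asymp (1+|t|)^{3A+O(1)}$ comes out of size $\asymp |\Phi|^{3A+O(1)}$. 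Crudely bounding $|\Phi|$ by a power of $T$ gives exactly the $T^{CA}$ you wrote down, and then the argument fails for small $\varepsilon$.

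What the stationary phase calculation in the paper is implicitly exploiting, and what you need here, is the finer comparison between $yN$ and $|\Phi|$ rather than between $yN$ and $T$. One can check that, in the ranges at hand, $yN \gg T^{\varepsilon}\, \max(1,|\Phi|)^{3}$: the bound $yN \geq T^{3\varepsilon}$ (which you proved, using $r < X$) handles the regime $|\Phi| \lesssim 1$, while for $|\Phi| \gg 1$ the inequality $yN/|\Phi|^{3} \geq T^{3+3\varepsilon}/(l N^{2}) \gg T^{\varepsilon}$ follows from $|u| \leq T^{-\varepsilon}$ and $N \ll T^{3/2+\varepsilon}/l^{2}$ together with $y \geq 1/(lr^{3})$. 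With this comparison, $(yN)^{-A}|\Phi|^{3A} \leq T^{-\varepsilon A}$ and the leftover $T^{O(1)}$ factors are then killed by taking $A$ large. So your Mellin--Barnes approach can be made to work, but only after replacing the crude $T^{CA}$ in the contour-shift estimate by the phase-sensitive $|\Phi|^{3A+O(1)}$ and then relating $|\Phi|$ to $yN$; as stated, the inference from $yN \geq T^{3\varepsilon}$ alone does not go through.
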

For this choice of $X$, notice that the error term of Theorem \ref{thm:largesieve}  is
\begin{equation}
\label{eq:Sfinal}
 \ll T^{\varepsilon}(N + T^2 + N^{2/3} T l^{1/3} + \frac{N^{3/2}}{T}) \|\mathcal{A}\|^2,
\end{equation}
and that the error term of \eqref{eq:7.8} is absorbed by \eqref{eq:Sfinal}.
Since $N = P/l^2 \ll T^{3/2 + \varepsilon}/l^2$, \eqref{eq:Sfinal} is $\ll T^{2 + \varepsilon} \sum_{N < n \leq 2N} \frac{|A(l,n)|^2}{n}$.  Thus combining \eqref{eq:L1} and \eqref{eq:Sfinal} gives \eqref{eq:Sm}.

\begin{proof}{Proof of Lemma \ref{lemma:L1}}.
Opening the Kloosterman sum, we get
\begin{equation}
 C = \frac{1}{\sqrt{N}} \sumstar_{h \shortmod{r}} \e{hk}{r} \sum_{n} \e{\overline{h}n}{r} A(l,n)  w_3\leg{n}{N} \e{un}{r T}.
\end{equation}
An application of the Voronoi summation formula gives
\begin{equation}
 C = \frac{r}{\sqrt{N}}  \sumstar_{h \shortmod{r}} \e{hk}{r} \sum_{\epsilon = \pm 1} \sum_{m_1 | rl} \sum_{m_2 >0} \frac{A(m_2, m_1)}{m_1 m_2} S\left(lh,\epsilon m_2;\frac{lr}{m_1}\right) \Psi^{\epsilon}\leg{m_2 m_1^2}{r^3 l}.
\end{equation}
for certain functions $\Psi^{\epsilon}(x)$.  
We will now determine the size of $\Psi^{\epsilon}$, showing that it is negligibly small for the variables in the relevant ranges.

Here each $\Psi^{\epsilon}(x)$ is expressed as a linear combination of two other functions $\Psi_0$ and $\Psi_1$.  As X. Li notes \cite{Li}, $\Psi_1$ has similar asymptotic behavior to $\Psi_0$, so we only consider $\Psi_0$.  Then we utilize her Lemma 2.1 (a generalization of Lemma 3 of Ivi\'{c} \cite{Ivic}) which gives the asymptotic behavior of $\Psi_0(x)$.  Precisely, we have
\begin{equation}
 \Psi_0(x) = x \int_0^{\infty} w_3\leg{y}{N} \e{uy}{r T} \sum_{j=1}^{K} \frac{c_j \cos(6 \pi (xy)^{1/3}) + d_j \sin(6 \pi (xy)^{1/3})}{(xy)^{j/3}} dy + O((xN)^{\frac{-K + 2}{3}}),
\end{equation}
provided $xN \gg 1$, where $c_j$ and $d_j$ are certain absolute constants.  Since $r < X$, then $xN \geq T^{\varepsilon}$ for the choice $X = (N/l)^{1/3} T^{-\varepsilon}$.
We need to analyze the integral 
\begin{multline}
 x \int_0^{\infty} (xy)^{-1/3} w_3\leg{y}{N} \e{uy}{R T} e\left(3 (xy)^{1/3}\right) dy 
\\
= (N x)^{2/3} \int_0^{\infty}  \frac{w_3(y)}{y^{1/3}}  e\left(\frac{uyN}{r T} + 3 (xyN)^{1/3}\right) dy.
\end{multline}
The integral is negligibly small unless there is a stationary point near the support of $w_3$, on integration by parts.  If $f(y) = \frac{uyN}{rT} + 3 (xyN)^{1/3}$ then
\begin{equation}
 f'(y) = \frac{uN}{r T} + \frac{(Nx)^{1/3}}{y^{2/3}}. 
\end{equation}
The stationary point $y_0$ is at
\begin{equation}
 y_0 =  \frac{x^{1/2} (rT)^{3/2}}{N |u|^{3/2}},
\end{equation}
The condition that $y_0 \asymp 1$ 
means
\begin{equation}
x \asymp \frac{N^2 |u|^3}{r^3 T^3}  = \frac{P^2 |u|^3}{r^3 T^3 l^4}  \ll \frac{T^{-\varepsilon}}{r^3 l^4}.
\end{equation}
Hence $m_1^2 m_2 \ll T^{-\varepsilon}/l^3$, and so the sum is empty.  To be more precise, the magnitude of the weight functions $\Psi^{\epsilon}(x)$ is $\ll T^{-A}$ for any $A > 0$, whence \eqref{eq:L1} follows.
\end{proof}

\section{Proof of Theorem \ref{thm:sixthmoment}}
\label{section:sixth}
Here we briefly sketch what changes are necessary to prove Theorem \ref{thm:sixthmoment}.  The high-level explanation is that we should replace the $GL(3)$ Maass form by an appropriate $GL(3)$ Eisenstein series and use a Voronoi summation formula for the coefficients of the Eisenstein series.  Unfortunately, such a general Voronoi formula does not appear to exist in the literature, though work of Ivi\'c \cite{Ivic} is sufficient here.

A replacement for \eqref{eq:Rankin} is the following Dirichlet series
\begin{equation}
 L_j(s)^3 = \sum_{a,b \geq 1}^{\infty} \frac{\mu(a) d_3(b)}{(ab)^{2s}} \sum_{n=1}^{\infty} \frac{d_3(n) \lambda_j(an)}{(an)^s}.
\end{equation}
A proof of this is an exercise with the Hecke relations.
Here $a$ and $b$ play a role analogous to $l$ in the proof of Theorem \ref{thm:mainthm}.  
An easy modification of Section \ref{section:cleaning} reduces the proof of Theorem \ref{thm:sixthmoment} to showing
\begin{equation}
\label{eq:10.2}
 \sum_j w(t_j) |\rho_j(1)|^2 \left| \sum_{\substack{N < n \leq 2N}} a_n \lambda_j(n) n^{i t_j} \right|^2 \ll T^{2 + \varepsilon},
\end{equation}
where $N = P/(a^2 b^2)$, $P \ll T^{\frac32 + \varepsilon}$, and $a_n = n^{-\half} w_2(n/N) d_3(n/a)$ if $a|n$, and $a_n = 0$ otherwise.
Theorem \ref{thm:largesieve} applies to \eqref{eq:10.2}, whence it suffices to estimate
\begin{equation}
\label{eq:10.4a}
 \sum_{r < X} \frac{T}{r} \sum_{0 \neq |k| \ll r T^{\varepsilon}} \frac{1}{|k|} \int_{-T^{-\varepsilon}}^{T^{-\varepsilon}} 
\left|\sumstar_{h \shortmod{r}} \e{hk}{r} N^{-\half} \sum_n w_3\leg{an}{N} d_3(n)  \e{\overline{h} a'n}{r'} \e{ua'n}{r'T} \right|^2 du,
\end{equation}
where we set $a' = a/(a,r)$, and $r' = r/(a,r)$.
As in Section 9, we apply the analog of the Voronoi summation formula for $d_3(n)$ as encapsulated in Theorem 2 of \cite{Ivic}.  This gives
\begin{equation}
\sum_n w_3\leg{an}{N} d_3(n)  \e{\overline{h} a'n}{r'} \e{ua'n}{r'T} = M.T. + \frac{1}{r'^3} \sum_n A_3'\left(n,\frac{\overline{h} a' }{r'}\right) \Psi\leg{n}{r'^3} + \dots,
\end{equation}
where $M.T.$ denotes a certain explicit main term, $A_3$ is a certain complete exponential sum, 
and the dots indicate three other expressions of a similar shape with slightly different weight functions and possibly $h$ replaced by $-h$.  Here the weight function $\Psi(x)$ satisfies an asymptotic expansion with leading order term given by
\begin{equation}
 \Psi\leg{n}{r'^3} \sim \int_0^{\infty} w_4\leg{ay}{N}\e{uay}{rT}  \e{3 (yn)^{1/3}}{r'}  \leg{ny}{r'^3}^{-1/3} dy.
\end{equation}
Integration by parts shows that $\Psi(n/r'^3)$ is negligible, as in Section \ref{section:Voronoi}.

Now we analyze the contribution of the main term, given as $\text{Res}_{s=1} E_3\left(s,\frac{a' \overline{h}}{r'} \right) F(s)$, where $E_3\left(s,\frac{c}{d}\right)$ is a variant on the Estermann function with $d(n)$ replaced by $d_3(n)$, and
\begin{equation}
 F(s) = \int_0^{\infty} w_3\leg{ax}{N} \e{uax}{rT} x^{s-1} dx.
\end{equation}
Here $F(s)$ is holomorphic at $s=1$, and $F(1) = \frac{N}{a} \widehat{w_3}\leg{uN}{rT}$.  Furthermore, Ivi\'{c} shows that $\text{Res}_{s=1} E_3(s,c/d) \ll d^{-1 + \varepsilon}$, and that the residue does not depend on $h$ (so the sum on $h$ is a Ramanujan sum).  With this information, we see that 
this main term gives 
\begin{equation}
\label{eq:10.4}
 \ll T^{1 + \varepsilon} \sum_{r < X} r^{-1} \intR \sum_{0 \neq k \ll r T^{\varepsilon}} \frac{1}{|k|} \left|\frac{\sqrt{N}}{a} \widehat{w_4}\leg{uN}{rT} \frac{1}{r'} S(0,k;r) \right|^2 du.
\end{equation}
Using only that $\intR a^{-2} |\widehat{w_3}\leg{uN}{rT}|^2 du \ll \frac{rT}{a^2N}$, $\sum_{1 \leq k \leq r} |k|^{-1}|S(0,k;r)|^2 \ll r^{1 + \varepsilon}$, and $r' \geq r/a$, gives that \eqref{eq:10.4} is $\ll T^{2 + \varepsilon}$, as desired.

\end{document}